\newcommand{\lc}{\preceq_{\textnormal{c}}}
\newcommand{\ld}{\preceq_{E}}
\newcommand{\lic}{\preceq_{icx}}
\newtheorem{theorem}{Theorem}[section]
\newtheorem{lemma}[theorem]{Lemma}
\newcommand{\conv}{\mathrm{conv}\,}
\newcommand{\convr}{\mathrm{conv}_R\,}
\newcommand{\iconv}{\mathrm{iconv}\,}
\newcommand{\PP}{\mathcal{P}}
\newcommand{\cP}{\mathcal{P}}
\newcommand{\Pc}{\cP}
\renewcommand{\P}{\cP}
\newcommand{\midauto}{\,\middle\vert\,}
\newcommand{\R}{\mathbb R}
\theoremstyle{definition}
\newtheorem{definition}[theorem]{Definition}
  \theoremstyle{definition}
\theoremstyle{definition}
\newtheorem{remark}[theorem]{Remark}
\newcommand{\cpl}{\Pi}
\newcommand{\LSC}{\mathcal{LSC}}
\newcommand{\mean}{\mathrm{mean}}
\title{Weak Optimal Transport: When is the Dual Potential convex?}
\author{Filip Pramenković}
\date{\monthyeardate\today}
\begin{document}

\begin{abstract}
 Weak optimal transport generalizes the classical theory of optimal transportation to nonlinear cost functions and covers a range of problems that lie beyond the traditional theory --- including entropic transport, martingale transport, and applications in mechanism design. As in the classical case, the weak transport problem can also be written as a dual maximization problem over a pair of conjugate potentials. 

 We identify sharp monotonicity conditions on the cost under which the dual problem can be restricted to \emph{convex} potentials. This framework unifies several known results from the literature, including barycentric transport, martingale Benamou--Brenier, the multiple-good monopolist problem, Strassen's theorem, stochastic order projections and the classical Brenier theorem. 

\end{abstract}

\maketitle

\section{Introduction}

Weak optimal transport was introduced by Gozlan, Roberto, Samson and Tetali \cite{GoRoSaTe14} to extend classical transport theory to nonlinear cost functions. It  encompasses important applications that fall outside the classical transport framework while still allowing for the same basic existence and duality results, see e.g.\  \cite{GoRoSaSh18, AlBoCh18, BaPa19, BePaRiSc25}.

To introduce the weak transport problem, we set up some notation.
We consider Polish metric spaces $(X,d_X), (Y, d_Y)$ equipped with Borel probability measures $\mu, \nu$, respectively.
We write $\cpl(\mu, \nu)$ for the set of all \emph{couplings} or \emph{transport plans}, that is, probabilities on \(X \times Y\) which have \(X\)-marginal \(\mu\) and \(Y\)-marginal \(\nu\). For $p\geq 1$ we write $\Pc_p(Y)$ for the set of probabilities with finite $p$-moment, equipped with $p$-weak convergence. 
Given a lower semicontinuous (lsc) cost function $C:X\times \Pc_p(Y)\to [0,\infty]$ that is convex in the second argument, the primal weak transport problem is given by
\begin{align}\label{eq:PrimalWOT}
    \inf_{\pi\in\Pi(\mu,\nu)}\int C(x,\pi_x)d\mu(x),
\end{align}
where $\pi_x$ denotes the disintegration of $\pi$ with respect to its first marginal. As in classical optimal transport, lower semicontinuity of $C$ implies that \eqref{eq:PrimalWOT} is attained and equals the dual problem
\begin{align}
    \sup_{\psi \in {\mathcal{C}}_{b,p}(Y)} \mu(\psi^C) - \nu(\psi), \quad \psi^C(x):= \inf_{\rho\in \Pc_p(Y)}\rho(\psi) + C(x,\rho). 
\end{align}
We write $\mathcal C_{b,p}(Y)$ for the subset of continuous functions bounded from below by a constant and bounded from above by a multiple of $1+d_Y(\cdot,y_0)^p$ for some $y_0\in Y$.  

In many applications of WOT-duality, the dual problem simplifies significantly. For example, \ given $\mu, \nu\in \Pc_1(\R^d)$,  we have 
\begin{align}
    \inf_{\pi\in \cpl(\mu, \nu)} \int \chi_{x= \mean(\pi_x)} \, d\mu(x) = \sup_{ \substack{\psi \textrm{ convex} \\ \psi \in \mathcal C_{b,1}(\R^d)} } \mu(\psi)- \nu(\psi),
\end{align}
where $\chi_{x= \mean(\pi_x)}=0$, if $x= \mean(\pi_x)$, and $\infty$ otherwise. This duality result was first noticed in \cite{GoRoSaTe14} and directly implies Strassen's theorem on the existence of martingales between marginals in convex order, see Theorem \ref{strass} below for details.

To further illustrate the point, for $\mu, \nu \in \Pc_1(\R)$ we have 
\begin{align}
    \inf_{\pi\in \cpl(\mu, \nu)} \int (x-\mean(\pi_x))_+ \, d\mu(x) = \sup_{\substack{\psi \textrm{ increasing convex}\\ 1\textrm{-Lipschitz}}} \mu(\psi)- \nu(\psi).
\end{align}
This is referred to as the increasing convex Kantorovich--Rubinstein formula in \cite{BePaRiSc25}. 

We highlight that in both examples it suffices to optimize  over a smaller class of functions in the dual problem, namely convex and increasing convex functions. In fact, the same is true for several further problems in the weak transport framework: the original convex Kantorovich--Rubinstein formula \cite{GoRoSaTe14}, the Brenier--Strassen theorem \cite{GoJu18}, the dual formulation of the martingale Benamou--Brenier problem \cite{backhoffveraguas2025existencebassmartingalesmartingale}, optimal mechanism design \cite{DaDeTz17}, and the Gangbo--McCann--Strassen theorem \cite{BePaRiSc25}. 

The purpose of this note is to provide a unified explanation of the aforementioned dual problem simplifications. We specialize to the case of $Y=\R^d$, which coincides with the listed examples and simplifies our discussion. It turns out that \textit{stochastic orders} are the correct tool for explaining this phenomenon. Recall that $\rho_1, \rho_2 \in\PP_p(\R^d)$ are in  \textit{convex order}, in signs $\rho_1\lc \rho_2$, if $\rho_1(f) \leq \rho_2(f)$ for all convex functions $f:\R^d\to\R$. 

Our main result relates convexity of the dual potentials in weak transport problems to a monotonicity condition for the cost function:
\begin{theorem} \label{mainthm}
    Let $\mu\in\PP(X)$, $\nu \in \PP_p(\R^d)$ and $C:X\times\PP_p(\R^d)\to[0,\infty]$ be lsc and convex in the second argument. If $C$ is $\lc$-decreasing in the second argument, then 
\begin{align} \label{mainthmdispl}
		\inf_{\pi\in\Pi(\mu,\nu)} \int C(x,\pi_x)d\mu(x)= \sup_{\substack{\psi \in  {\mathcal{C}}^{}_{b,p}(\R^d) \\ \psi\textnormal{ convex}}} \mu(\psi^C) - \nu(\psi).
	\end{align}
    Conversely, if (\ref{mainthmdispl}) holds for all $\mu\in\PP(X)$, $\nu\in\PP_p(\R^d)$, then $C$ is $\lc$-decreasing in the second argument.
\end{theorem}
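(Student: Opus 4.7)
The plan is to combine the known weak-transport duality with a convex-envelope reduction. The general duality stated in the introduction,
\[
\inf_{\pi\in\cpl(\mu,\nu)}\int C(x,\pi_x)\,d\mu(x)=\sup_{\psi\in\mathcal{C}_{b,p}(\R^d)}\mu(\psi^C)-\nu(\psi),
\]
reduces the theorem to showing that, under the $\lc$-monotonicity hypothesis on $C$, the supremum does not drop when restricted to convex $\psi$. I would handle the two implications separately.

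For the forward direction, fix $\psi\in\mathcal{C}_{b,p}(\R^d)$ and replace it by its convex lsc envelope $\hat\psi:=\psi^{**}$. Since $\psi$ is bounded below and satisfies $\psi(y)\le c(1+|y-y_0|^p)$, the same bounds transfer to $\hat\psi$, which is finite and convex on $\R^d$ and therefore continuous; so $\hat\psi\in\mathcal{C}_{b,p}(\R^d)$. From $\hat\psi\le\psi$ one has $-\nu(\hat\psi)\ge-\nu(\psi)$, so the remaining task is the pointwise inequality $\hat\psi^C\ge\psi^C$. The engine is the envelope identity
\[
\rho(\hat\psi)=\inf\bigl\{\rho'(\psi):\rho'\in\PP_p(\R^d),\ \rho\lc\rho'\bigr\}\qquad\bigl(\rho\in\PP_p(\R^d)\bigr),
\]
obtained by combining the classical pointwise formula $\hat\psi(y)=\inf\{\tau(\psi):\mean(\tau)=y\}$ with a measurable-selection and gluing step. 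Granting this, for each $\rho$ and $\epsilon>0$ I would pick $\rho'_\epsilon\in\PP_p(\R^d)$ with $\rho\lc\rho'_\epsilon$ and $\rho'_\epsilon(\psi)\le\rho(\hat\psi)+\epsilon$; the hypothesis that $C(x,\cdot)$ is $\lc$-decreasing gives $C(x,\rho)\ge C(x,\rho'_\epsilon)$, so
\[
\rho(\hat\psi)+C(x,\rho)\;\ge\;\rho'_\epsilon(\psi)-\epsilon+C(x,\rho'_\epsilon)\;\ge\;\psi^C(x)-\epsilon.
\]
Letting $\epsilon\downarrow0$ and then infimising over $\rho$ yields $\hat\psi^C(x)\ge\psi^C(x)$; the opposite inequality is immediate from $\hat\psi\le\psi$.

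For the converse, I would argue by contradiction. Assume (\ref{mainthmdispl}) holds for every marginal pair, yet there exist $x_0\in X$ and $\rho_1\lc\rho_2$ in $\PP_p(\R^d)$ with $C(x_0,\rho_1)<C(x_0,\rho_2)$. Setting $\mu=\delta_{x_0}$ and $\nu=\rho_2$, the only coupling is $\delta_{x_0}\otimes\rho_2$, so the primal equals $C(x_0,\rho_2)$. For any convex $\psi\in\mathcal{C}_{b,p}(\R^d)$, the defining inequality of convex order yields $\rho_1(\psi)\le\rho_2(\psi)$, hence
\[
\psi^C(x_0)-\rho_2(\psi)\;\le\;\rho_1(\psi)+C(x_0,\rho_1)-\rho_2(\psi)\;\le\;C(x_0,\rho_1)<C(x_0,\rho_2).
\]
Thus the convex-restricted dual is strictly less than the primal, contradicting (\ref{mainthmdispl}).

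The one substantive point is the envelope identity in $\PP_p(\R^d)$. The ``$\le$'' direction is Jensen applied to $\hat\psi$ along a martingale kernel realising $\rho\lc\rho'$, which exists by Strassen's theorem. The ``$\ge$'' direction rests on Carathéodory in finite dimensions, which represents $\hat\psi(y)$ as $\tau(\psi)$ for a $(d+1)$-atomic $\tau$ with $\mean(\tau)=y$, followed by a measurable selection of such $\tau_y$ almost minimising $\tau\mapsto\tau(\psi)$; integrating against $\rho$ produces the candidate $\rho'_\epsilon$. The delicate issue is ensuring $\rho'_\epsilon\in\PP_p(\R^d)$, i.e.\ that the selected atoms do not blow up the $p$-th moment; controlling this via the $p$-growth upper bound on $\psi$ is the main technical hurdle of the proof.
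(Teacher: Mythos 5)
Your approach is essentially the same as the paper's: reduce via the general duality (Theorem \ref{th:kantorovichduality}), pass from $\psi$ to its convex hull, and use $\lc$-monotonicity of $C$ to show $(\conv\psi)^C=\psi^C$; the converse via $\mu=\delta_{x_0}$ and convex-order comparison of $\rho_1(\psi),\rho_2(\psi)$ is also the paper's argument, phrased as a contrapositive. The one real issue is the ``main technical hurdle'' you flag, namely showing the measurably-selected $\rho'_\epsilon$ lies in $\PP_p(\R^d)$. Your suggested route --- control via the $p$-growth \emph{upper} bound on $\psi$ --- does not by itself work: the upper bound guarantees $\rho(\conv\psi)<\infty$ but puts no restriction on how far the selected atoms $\xi_y$ may spread (e.g.\ if $\psi$ is bounded, nothing stops them from escaping to infinity). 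The paper's Lemma \ref{rc stabil lemma} resolves this by regularizing with $\psi_n:=\psi+\tfrac1n|\cdot|^p$: since $\psi_n\geq\ell+\tfrac1n|\cdot|^p$, any near-optimal selection for $\conv\psi_n$ automatically has a coercive lower bound, which together with $\rho(\conv\psi_n)<\infty$ forces $\widetilde\rho(|\cdot|^p)<\infty$; one then passes to the limit $n\to\infty$ using Lemma \ref{repr lemm}\textit{\ref{prop6}} to recover the inequality for $\conv\psi$ itself. Without some such trick, your measurable selection need not land in $\PP_p(\R^d)$, so this is a genuine gap in the forward direction as written.
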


Furthermore, in Theorem \ref{thmatt} we show that under suitable regularity assumptions, the supremum in (\ref{mainthmdispl}) is attained by a $\nu$-integrable convex function $\psi^{\text{opt}}:\R^d\to(-\infty,\infty]$. More precisely, to guarantee attainment we require that the cost function does not grow too rapidly and that it exhibits some mild continuity in the second argument, see Section \ref{sec att}.

Analogous duality and attainment results hold for coordinatewise increasing convex functions and the associated \textit{increasing convex order} denoted by $\lic$, see Theorem \ref{mainthmic} and Theorem \ref{thmatt2}. 

In fact, the simplification of the dual problem can be extended to general Polish spaces and orders defined by \textit{stable cones}, an abstract class of functions that captures the relevant properties of convex and increasing convex functions, see Section \ref{nesto}.

\medskip

The paper is organized as follows. Section \ref{PR} serves as a preparatory section, where we collect relevant preliminary results for our discussion. In Section \ref{MT}, we prove the central theorems on restricting the dual to (increasing) convex functions and in Section \ref{sec att} we consider under what assumptions and in what sense the dual problem is attained.  In Section \ref{APP} we apply the main theorems to recover known results from the literature and in Section \ref{nesto} we discuss a general abstract framework for the restriction problem.



\section{Preliminary Results}
\label{PR}
In this section, we lay the foundations of our main theorems by discussing and proving some auxiliary results. 


\subsection{Convex and Increasing Convex Hulls}
 We start by recalling the definitions of the convex hull and the increasing convex hull, which play a key role in proving the main theorems.

    Let $\psi\in{\mathcal{C}}_{b,p}(\R^d)$. We denote its \textit{convex hull} as 
    $$\conv\psi(y):=\sup_{ \substack{\phi\le\psi \\ \phi \text{ convex}}} \phi(y), \;\; y\in\R^d,$$
and its \textit{increasing convex hull} as 
    $$\iconv\psi(y):=\sup_{ \substack{\phi\le\psi \\ \phi \text{ inc. convex}}} \phi(y), \;\; y\in\R^d.$$


    

Recall that for $\rho_1,\rho_2\in\P_p(\R^d)$ the notation $\rho_1\lc \rho_2$ (resp. $\rho_1\lic \rho_2$), means that $\rho_1(f)\le\rho_2(f)$ for all $f:\R^d\to\R$ convex (resp. increasing convex).

The following lemma is a collection of well-known facts about the convex and increasing convex hull.

\begin{lemma} \label{repr lemm}
    Let $\psi \in {\mathcal{C}}_{b,p}(\R^d)$. Then we have 
    \begin{enumerate}[label=(\roman*)]\label{repr enac}
     \item \label{conv repr bullet}  $\conv{\psi}(y)=\inf\left\{\rho(\psi) \mid \delta_y\lc\rho,\; \rho\in\PP_p(\R^d)\right\},$
     \item \label{iconv repr bullet}  $\iconv{\psi}(y)=\inf\left\{\rho(\psi) \mid \delta_y\lic\rho,\; \rho\in\PP_p(\R^d)\right\},$
      \item \label{prop1} $\conv{\psi}\leq \psi$,
     \item \label{prop3}$\psi_1,\psi_2\in{\mathcal{C}}_{b,p}(\R^d)$ and $\psi_1\leq\psi_2$, then $\conv{\psi_1}\leq \conv{\psi_2},$
    \item \label{prop4}$\conv{\psi}\in{\mathcal{C}}_{b,p}(\R^d),$
    \item \label{prop6}$\conv{\psi_n}\downarrow \conv{\psi}$, as $n\to\infty$, where $\psi_n:=\psi+\frac{1}{n}|\cdot|^p.$
    \end{enumerate}
Additionally, properties \ref{prop1}--\ref{prop6} also hold for the increasing convex hull.
\end{lemma}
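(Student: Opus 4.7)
The plan is to establish (i) and (ii) first---these are the only items requiring genuine convex analysis---and then read off (iii)--(vi) as routine consequences.

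For (i), one inequality is immediate from Jensen: since $\conv\psi$ is itself convex and $\le\psi$, any $\rho\in\PP_p(\R^d)$ with $\delta_y\lc\rho$ satisfies $\conv\psi(y)=\delta_y(\conv\psi)\le\rho(\conv\psi)\le\rho(\psi)$. For the reverse direction, I would introduce the finite-combination envelope
$$g(y):=\inf\Bigl\{\textstyle\sum_{i=1}^n\lambda_i\psi(y_i):n\in\mathbb N,\ \lambda_i\ge 0,\ \sum\lambda_i=1,\ \sum\lambda_i y_i=y\Bigr\},$$
verify by a direct algebraic manipulation that $g$ is convex on $\R^d$ and dominated by $\psi$ (use the trivial representation $y=1\cdot y$), and conclude $g\le\conv\psi$. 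Every admissible tuple in the definition of $g(y)$ corresponds to a finitely supported probability $\rho=\sum\lambda_i\delta_{y_i}\in\PP_p(\R^d)$ with barycenter $y$, i.e.\ $\delta_y\lc\rho$, so $\inf\{\rho(\psi):\delta_y\lc\rho\}\le g(y)\le\conv\psi(y)$, closing the loop.

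The argument for (ii) will be structurally identical once the equality constraint $\sum\lambda_i y_i=y$ is relaxed to $\sum\lambda_i y_i\ge y$ componentwise. This relies on the characterization that $\delta_y\lic\rho$ iff the barycenter of $\rho$ dominates $y$ componentwise, which I would verify by testing against the increasing convex coordinate maps $x\mapsto x_i$ in one direction and by combining Jensen's inequality with the coordinatewise monotonicity of the test function in the other.

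Items (iii) and (iv) are immediate from the definition of $\conv$ as a supremum. For (v), the constant lower bound of $\psi$ is itself a convex admissible function, so $\conv\psi$ inherits that lower bound; the upper bound $\conv\psi\le\psi$ transfers the $p$-growth from $\psi$, and convexity plus finiteness on all of $\R^d$ then forces continuity by the standard fact that real-valued convex functions on Euclidean space are automatically continuous.

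The only delicate step is (vi), and this is where (i) becomes indispensable---indeed I expect it to be the main obstacle. Monotonicity from (iv) immediately yields $\conv\psi_n\downarrow L\ge\conv\psi$; the matching inequality is the content. Given $y$ and $\varepsilon>0$, I would choose $\rho\in\PP_p(\R^d)$ with $\delta_y\lc\rho$ and $\rho(\psi)\le\conv\psi(y)+\varepsilon$, and exploit $\rho(|\cdot|^p)<\infty$ (automatic from $\rho\in\PP_p(\R^d)$) to estimate
$$\conv\psi_n(y)\le\rho(\psi_n)=\rho(\psi)+\tfrac{1}{n}\rho(|\cdot|^p)\xrightarrow{n\to\infty}\rho(\psi)\le\conv\psi(y)+\varepsilon.$$
Sending $\varepsilon\downarrow 0$ then gives $L(y)\le\conv\psi(y)$, and the analogues for $\iconv$ follow verbatim via (ii).
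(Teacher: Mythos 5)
Your proof is correct, and it takes a genuinely different but equally valid route through the lemma. The paper proves \textit{(ii)} directly (regarding \textit{(i)} as classical), by defining $f(y):=\inf\{\rho(\psi)\mid\delta_y\lic\rho\}$ and checking that $f$ is increasing, convex, and a minorant of $\psi$, which forces $f\le\iconv\psi$; combined with the Jensen direction this closes the argument without ever mentioning finitely supported measures. You instead prove \textit{(i)} via the finite-combination envelope $g$, show $g$ is a convex minorant of $\psi$ (hence $g\le\conv\psi$), observe that finitely supported $\rho$ with barycenter $y$ form a subclass of the feasible set so that $\inf_\rho\rho(\psi)\le g$, and sandwich. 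For \textit{(ii)} you replace the barycenter equality constraint with coordinatewise domination and invoke the characterization $\delta_y\lic\rho\Leftrightarrow\mathrm{mean}(\rho)\ge y$, which you verify correctly in both directions. The two strategies are close cousins---both exhibit a candidate in the relevant cone that minorizes $\psi$ and squeeze---but the paper works with the measure-theoretic infimum $f$ directly, whereas your version is slightly more constructive at the cost of an extra intermediate object. Your spelled-out proofs of \textit{(iii)}--\textit{(vi)}, which the paper dismisses as obvious, are all sound; in particular your treatment of \textit{(vi)} (choose a near-optimizer $\rho$ for $\psi$ via \textit{(i)}, note it is feasible for every $\psi_n$, and use $\rho(|\cdot|^p)<\infty$ to pass to the limit) is exactly the argument being left implicit.
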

\begin{proof}
Since \textit{\ref{iconv repr bullet}} is less commonly cited than \textit{\ref{conv repr bullet}}, let us prove it. 
Fix $\psi\in{\mathcal{C}}_{b,p}(\R^d)$, $y\in \R^d$ and $\rho\in\P_p(\R^d)$ such that $\delta_y \lic \rho$. Then
$$\iconv\psi(y)=\sup_{ \substack{\phi\le\psi \\ \phi \text{ inc. convex}}} \phi(y)\leq \sup_{ \substack{\phi\le\psi \\ \phi \text{ inc. convex}}} \rho(\phi) \leq \rho(\psi),$$
and taking the infimum over all such $\rho$ proves the first direction. 

Now write $$f(y):=\inf\left\{\rho(\psi) \mid \delta_y\lic\rho,\; \rho\in\PP_p(\R^d)\right\}.$$
To prove the converse inequality, it suffices to show that $f$ is increasing, convex and majorized by $\psi$.

Note that choosing $\rho=\delta_y$ yields $f(y)\leq\psi(y)$. Next, we show that $f$ is convex. Fix $y_1,y_2\in\R^d$, $\alpha\in(0,1)$ and $\varepsilon>0$. Pick $\rho_1,\rho_2 \in\P_p(\R^d)$ with  $\delta_{y_1}\lic \rho_1$ and  $\delta_{y_2}\lic \rho_2$ such that $\rho_1(\psi)\leq f(y_1)+\varepsilon$ and $\rho_2(\psi)\leq f(y_2)+\varepsilon$. Then
$$f(\alpha y_1+(1-\alpha)y_2)\leq (\alpha\rho_1+(1-\alpha)\rho_2)(\psi) \leq \alpha f(y_1)+(1-\alpha)f(y_2)+ \varepsilon.$$

Lastly, we show that $f$ is increasing. Let $y_1,y_2\in\R^d$ be such that $y_1\leq y_2$ coordinatewise. In particular, we have $\delta_{y_1}\lic\delta_{y_2}$. But by definition of $f$, this immediately implies $f(y_1)\leq f(y_2).$ 

Points \textit{\ref{prop1}--\ref{prop6}} are obvious.
\end{proof}

\subsection{Weak Optimal Transport and Duality}\label{WOTD}
We now introduce some key notions related to weak optimal transport. As is common in weak transport, we assume that $C:X\times\PP_p(Y)\to[0,\infty]$ is lsc and the map $\rho\mapsto C(x,\rho)$ is convex for each $x\in X$.


    


\begin{definition}[$C$-conjugate] \label{cconj}
   Let $C:X\times\PP_p(Y)\to[0,\infty]$ be measurable. We define the $C$-conjugate $\psi^C:X\to (-\infty,\infty]$ by
    \begin{equation*}
        \psi^C(x):=\inf_{\rho\in\PP_p(Y)} \rho(\psi)+C(x,\rho), \;\;\psi\in{\mathcal{C}}^{}_{b,p}(Y).
    \end{equation*}
\end{definition}
The function $\psi^C$ is lower semianalytic, in particular, universally measurable (see \cite[Proposition 7.47]{bertsekas1978stochastic}). It is also bounded from below, hence $\mu(\psi^C)$ is well-defined for all measures $\mu\in\PP(X)$.

\begin{lemma} \label{rc stabil lemma}
    Let $C:X\times\PP_p(\R^d)\to[0,\infty]$ be measurable and $\psi\in {\mathcal{C}}_{b,p}(\R^d)$. If $C$ is $\lc$-decreasing in the second argument, then we have $$ \psi^C=({\conv{\psi}})^C,$$
    and if $C$ is $\lic$-decreasing in the second argument, then we have $$ \psi^C=({\iconv{\psi}})^C.$$
\end{lemma}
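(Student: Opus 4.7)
The plan is to prove $\psi^C = (\conv\psi)^C$ by showing both inequalities separately, relying on the variational representation of the convex hull in Lemma \ref{repr lemm}\ref{conv repr bullet}. The inequality $(\conv\psi)^C \le \psi^C$ is immediate and does not use monotonicity of $C$: since $\conv\psi \le \psi$ by Lemma \ref{repr lemm}\ref{prop1}, one has $\rho(\conv\psi) + C(x,\rho) \le \rho(\psi) + C(x,\rho)$ for every $\rho \in \PP_p(\R^d)$, and taking infima yields the claim.

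For the reverse inequality I would fix $x \in X$, a competitor $\rho \in \PP_p(\R^d)$ for $(\conv\psi)^C(x)$, and $\epsilon > 0$, with the aim of producing $\tilde\rho \in \PP_p(\R^d)$ with $\tilde\rho(\psi) + C(x,\tilde\rho) \le \rho(\conv\psi) + C(x,\rho) + \epsilon$. By Lemma \ref{repr lemm}\ref{conv repr bullet}, for each $y \in \R^d$ there exists $\rho_y \in \PP_p(\R^d)$ with $\delta_y \lc \rho_y$ and $\rho_y(\psi) \le \conv\psi(y) + \epsilon$. A measurable selection argument (e.g.\ the Jankov--von Neumann theorem) then yields a universally measurable kernel $y \mapsto \rho_y$ with these properties. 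Setting $\tilde\rho := \int \rho_y \, d\rho(y)$, the pointwise relations $\delta_y \lc \rho_y$ integrate to $\rho \lc \tilde\rho$, since for every convex $f$ one has $\rho(f) = \int f\, d\rho \le \int \rho_y(f)\, d\rho(y) = \tilde\rho(f)$. The $\lc$-monotonicity of $C$ then delivers $C(x,\tilde\rho) \le C(x,\rho)$, while Fubini gives $\tilde\rho(\psi) \le \rho(\conv\psi) + \epsilon$. Sending $\epsilon \to 0$ and taking the infimum over $\rho$ yields $\psi^C(x) \le (\conv\psi)^C(x)$.

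The main technical obstacle is the combination of the measurable selection with the requirement that $\tilde\rho$ actually lie in $\PP_p(\R^d)$: convex order only lower-bounds convex integrals, so the $p$-moment of $\tilde\rho$ is not automatically controlled by that of $\rho$. One way around this is to restrict the selection a priori to measures with $p$-moment controlled by a locally bounded function of $y$, which is legitimate since a near-optimizer for $\conv\psi(y)$ can always be found within a sufficiently large moment ball. Alternatively, one regularizes via the sequence $\psi_n := \psi + \tfrac{1}{n}|\cdot|^p$ from Lemma \ref{repr lemm}\ref{prop6}: coercivity of $\psi_n$ forces the infimum defining $\conv\psi_n(y)$ to be attained on measures with uniformly controlled $p$-moment, after which one passes to $n \to \infty$ using Lemma \ref{repr lemm}\ref{prop6} and monotone convergence.

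The increasing convex case is entirely analogous: one replaces $\conv\psi$ by $\iconv\psi$, invokes Lemma \ref{repr lemm}\ref{iconv repr bullet} instead of \ref{conv repr bullet}, and uses that $\lic$ is likewise preserved under mixtures along $\rho$, so that $\delta_y \lic \rho_y$ for all $y$ lifts to $\rho \lic \tilde\rho$; the $\lic$-monotonicity hypothesis on $C$ then closes the argument in the same way.
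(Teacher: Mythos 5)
Your proposal matches the paper's proof in structure and in all the key ideas: the easy direction from $\conv\psi\le\psi$, the measurable selection of near-optimal dilations $\rho_y$, the mixture $\tilde\rho=\int\rho_y\,d\rho(y)$, the observation that $\rho\lc\tilde\rho$, and the appeal to $\lc$-monotonicity of $C$. You also correctly isolate the genuine technical point --- ensuring $\tilde\rho\in\PP_p(\R^d)$ --- and your second suggested fix, regularization by $\psi_n=\psi+\tfrac1n|\cdot|^p$, is exactly what the paper does; note however that the paper does not pass $n\to\infty$ \emph{after} the selection but rather fixes one $N$ at the outset with $\rho(\conv\psi_N)-\rho(\conv\psi)\le\varepsilon/2$, performs the selection for $\psi_N$, and concludes directly from $\tilde\rho(\psi)\le\tilde\rho(\psi_N)\le\rho(\conv\psi_N)+\varepsilon/2\le\rho(\conv\psi)+\varepsilon$. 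Your first suggested fix (restricting the selection a priori to a moment ball depending locally on $y$) is not justified as stated, since the inequality $\rho_y(\psi)\le\conv\psi(y)+\varepsilon$ yields no control on the $p$-moment of $\rho_y$ when $\psi$ is not coercive --- precisely the gap that the $\tfrac1n|\cdot|^p$ regularization closes.
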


\begin{proof} We prove the first claim, as the second is proven identically.
    Fix $\psi\in{\mathcal{C}}_{b,p}(\R^d)$. Then
    $$ \psi^C(x)=\inf_{\rho\in\PP_p(\R^d)} \rho(\psi)+C(x,\rho)\geq \inf_{\rho\in\PP_p(\R^d)} \rho(\conv\psi)+C(x,\rho)=(\conv{\psi})^C(x),$$
    where the inequality is by Lemma \ref{repr lemm}\textit{\ref{prop1}}. 
    
    The converse inequality is equivalent to the statement
   \begin{equation} \label{nejjed}
        \forall\varepsilon>0\; \forall\rho\in\PP_p(\R^d) \;\exists\widetilde{\rho}\in\PP_p(\R^d):\widetilde{\rho}(\psi)+C(x,\widetilde{\rho})\leq {\rho}(\conv{\psi})+C(x,\rho)+\varepsilon.
   \end{equation}
    Fix $\varepsilon>0$ and $\rho\in\PP_p(\R^d).$ Then by Lemma \ref{repr lemm}\textit{\ref{prop6}} we have
    $\conv{\psi_n}\downarrow \conv{\psi}$, as $n\to\infty$, where $\psi_n:=\psi+\frac{1}{n}|\cdot|^p$. Since $\rho(\conv{\psi_1})<\infty$, we have by monotone convergence 
    $$\rho(\conv{\psi_n})\xrightarrow{n\to\infty}\rho(\conv{\psi}).$$
    So fix $N\in\mathbb N$ sufficiently large such that
   \begin{equation} \label{suff small}
        \rho(\conv{\psi_N})-\rho(\conv \psi)\leq \varepsilon/2.
   \end{equation}
    Now by Lemma \ref{repr lemm}\textit{\ref{conv repr bullet}} applied to $\psi_N$ we have that the set 
    $$A:=\left\{(y,\xi)\in \R^d\times\PP_p(\R^d) \mid \conv{\psi_N}(y)+\varepsilon/2\geq \xi(\psi_N), \;\delta_y \lc\xi\right\}$$
    satisfies $\text{proj}_1A=\R^d$. It is straightforward to verify that $A$ is Borel measurable and by the von Neumann uniformization theorem, we may extract a universally measurable selection $\{\xi_y\}_{y\in \R^d}\subseteq\PP_p(\R^d)$, such that
    \begin{equation} \label{selec prop}
        \conv{\psi_N}(y)+\varepsilon/2\geq \xi_y(\psi_N), \;\delta_y \lc\xi_y.
    \end{equation}
   We define $\widetilde{\rho}$ by $d\widetilde{\rho}(z):=\int_{\R^d} d\xi_y(z)d\rho(y).$ Integrating the inequality in (\ref{selec prop}) with respect to $\rho$ yields
   \begin{equation} \label{bitna}
       \rho(\conv{\psi_N})+\varepsilon/2\geq \widetilde{\rho}(\psi_N).
   \end{equation}
   On the one hand, since $\psi$ is bounded from below we have
$$\widetilde{\rho}(\psi_N)\geq \ell +1/N\widetilde{\rho}(|\cdot|^p),$$
and on the other hand, by Lemma \ref{repr lemm}\textit{\ref{prop4}}, 
$\rho(\conv{\psi_N}) <\infty.$
These two inequalities combined with (\ref{bitna}) yield that $\widetilde{\rho}\in\PP_p(\R^d)$.

Now notice that (\ref{suff small}) and (\ref{bitna}) imply
\begin{equation} \label{bistna}
       \rho(\conv{\psi})+\varepsilon\geq \widetilde{\rho}(\psi_N)\geq\widetilde{\rho}(\psi).
   \end{equation}
This partially shows the inequality in (\ref{nejjed}). To finalize the proof, note that for arbitrary convex $f$, we have
$$\widetilde{\rho}(f)=\int \xi_y(f)d\rho(y)\geq\int \delta_y(f)d\rho(y)=\rho(f),$$
i.e., $\rho\lc\widetilde{\rho}$, which by the monotonicity assumption implies $C(x,\rho)\geq C(x,\widetilde{\rho})$.
\end{proof}

We conclude this section by recalling the general duality theorem for weak optimal transport.
\begin{theorem}[{\cite[{Theorem 1.3}]{veraguas2019existence}}]\label{th:kantorovichduality}
	 Let $\mu\in\PP(X)$, $\nu \in \PP_p(Y)$ and $C:X\times\PP_p(Y)\to[0,\infty]$ be lsc and convex in the second argument. Then
	\begin{align}
		\inf_{\pi\in\Pi(\mu,\nu)} \int C(x,\pi_x)d\mu(x) = \sup_{\psi \in {\mathcal{C}}_{b,p}(Y)} \mu(\psi^C) - \nu(\psi).
	\end{align}
\end{theorem}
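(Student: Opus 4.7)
The easy direction $\sup \le \inf$ follows directly from the definition of the $C$-conjugate: for any $\psi \in {\mathcal{C}}_{b,p}(Y)$ and any $\pi \in \cpl(\mu,\nu)$, specializing $\rho = \pi_x$ in the definition of $\psi^C$ yields $\psi^C(x) \le \pi_x(\psi) + C(x,\pi_x)$. Integrating against $\mu$ and using that the $Y$-marginal of $\pi$ is $\nu$ gives $\mu(\psi^C) - \nu(\psi) \le \int C(x,\pi_x)\,d\mu(x)$, from which the inequality follows by taking sup on the left and inf on the right.

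For the reverse inequality, my plan is to apply Fenchel--Moreau biconjugation to the value function viewed as a function of the target marginal. Define
\[
\Phi : \PP_p(Y) \to [0, \infty], \qquad \Phi(\eta) := \inf_{\pi \in \cpl(\mu, \eta)} \int C(x, \pi_x)\, d\mu(x),
\]
so that the primal value equals $\Phi(\nu)$. I would first verify that $\Phi$ is convex (using convexity of $C(x,\cdot)$ together with the fact that for $\pi := \lambda \pi^0 + (1-\lambda)\pi^1$ with common first marginal $\mu$, the disintegration satisfies $\pi_x = \lambda \pi^0_x + (1-\lambda)\pi^1_x$) and lower semicontinuous on $\PP_p(Y)$ with the $p$-Wasserstein topology (via Prokhorov applied to the tight family $\cpl(\mu, \eta_n)$, combined with lower semicontinuity of $\pi \mapsto \int C(x,\pi_x)\,d\mu$ on couplings with first marginal $\mu$).

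Next, I would compute the Legendre--Fenchel conjugate of $\Phi$ with respect to the dual pair $(\PP_p(Y), {\mathcal{C}}_{b,p}(Y))$. Writing $\eta(\psi) - \Phi(\eta)$ as a supremum over $\pi \in \cpl(\mu,\eta)$ and then letting $\eta$ vary, the two suprema in $\Phi^*(\psi) = \sup_\eta [\eta(\psi) - \Phi(\eta)]$ collapse into a single supremum over couplings with only the first marginal constrained:
\[
\Phi^*(\psi) = \sup_{\pi :\, \pi_1 = \mu} \int \bigl[\pi_x(\psi) - C(x, \pi_x)\bigr]\, d\mu(x).
\]
A universally measurable selection argument (von Neumann--Aumann, of the same flavor as in the proof of Lemma \ref{rc stabil lemma}) lets one carry this supremum out fiberwise, giving $\Phi^*(\psi) = -\mu((-\psi)^C)$. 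Applying Fenchel--Moreau then yields $\Phi(\nu) = \sup_\psi [\nu(\psi) + \mu((-\psi)^C)]$, and the substitution $\psi \leftrightarrow -\psi$ produces the claimed formula.

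The main obstacle is setting up the locally convex framework so that Fenchel--Moreau applies cleanly: the dual pair must carry a topology coarse enough to be compatible with the pairing yet fine enough for $\Phi$ to remain lsc, which is why the $p$-Wasserstein topology rather than narrow convergence is used. A secondary technicality is that $-\psi$ need not lie in ${\mathcal{C}}_{b,p}(Y)$ (which is only required to be bounded from below by a constant); this I would handle by performing the biconjugation on a slightly larger ambient class of continuous test functions with two-sided polynomial growth, and verifying ex post that the resulting supremum is unchanged when restricted back to ${\mathcal{C}}_{b,p}(Y)$, for instance by noting that truncating $\psi$ from below by large negative constants does not decrease $\mu(\psi^C) - \nu(\psi)$ in the limit.
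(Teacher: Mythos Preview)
The paper does not prove this theorem at all: it is stated as Theorem~\ref{th:kantorovichduality} with the citation \cite[Theorem~1.3]{veraguas2019existence} and introduced as a result being ``recalled''. There is therefore no in-paper proof to compare against.

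Your sketch is the standard route and matches in outline the argument in the cited reference. Two comments on the technical content. First, the step you pass over most quickly is also the hardest: lower semicontinuity of $\Phi$ on $\PP_p(Y)$ hinges on the lower semicontinuity of $\pi \mapsto \int C(x,\pi_x)\,d\mu(x)$ on couplings with fixed first marginal, which is itself a nontrivial result requiring the joint lower semicontinuity of $C$ and a careful treatment of disintegrations under weak limits (this is one of the core lemmas in \cite{veraguas2019existence}). Second, your remark about the locally convex framework is on point: since $\PP_p(Y)$ is not a vector space, Fenchel--Moreau must be applied after embedding into a suitable space of signed measures (or, equivalently, one works with the extension of $\Phi$ by $+\infty$ off $\PP_p(Y)$ inside such a space) and one checks that the biconjugate at probability measures is unaffected. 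With these two points fleshed out, your argument goes through.
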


\section{Proof of the Main Theorems}\label{MT}

We now prove the main theorem on restricting the dual problem to convex functions.

\begin{proof}[Proof of Theorem \ref{mainthm}]
    Assume that $C$ is $\lc$-decreasing. Fix $\mu\in\PP(X)$ and $\nu\in\PP_p(\R^d)$. Then we have
    \begin{align*}
		\inf_{\pi\in\Pi(\mu,\nu)} \int C(x,\pi_x)d\mu(x)&= \sup_{\psi \in {\mathcal{C}}^{}_{b,p}(\R^d)} \mu(\psi^C) - \nu(\psi)\\
        &\leq \sup_{\psi \in {\mathcal{C}}^{}_{b,p}(\R^d)} \mu( (\conv\psi)^C) - \nu(\conv\psi)
	\end{align*}
    where the equality holds by Theorem \ref{th:kantorovichduality} and the inequality holds by Lemma \ref{rc stabil lemma} and Lemma \ref{repr lemm}\textit{\ref{prop1}}. 
But whenever $\psi\in{\mathcal{C}}_{b,p}(\R^d)$, we have $\conv \psi \in {\mathcal{C}}_{b,p}(\R^d)$, so
    \begin{align*}
        &\leq \sup_{\substack{\psi \in {\mathcal{C}}^{}_{b,p}(\R^d) \\ \psi \text{ convex}}} \mu( \psi^C) - \nu(\psi)\\
        &\leq \sup_{\psi \in {\mathcal{C}}^{}_{b,p}(\R^d)} \mu(\psi^C) - \nu(\psi)\\&=\inf_{\pi\in\Pi(\mu,\nu)} \int C(x,\pi_x)d\mu(x).
	\end{align*}

    For the converse, fix $x\in X$ and $\rho_1,\rho_2\in\PP_p(\R^d)$, such that $\rho_1\lc\rho_2$. By assumption, we have
    \begin{align*} 
       C(x,\rho_1)&=\inf_{\pi\in\Pi(\delta_x,\rho_1)}\int C(\widetilde x,\pi_{\widetilde{x}}) d\delta_x(\widetilde{x})= \sup_{\substack{\psi \in  {\mathcal{C}}^{}_{b,p}(\R^d) \\\psi \text{ convex}}} \psi^C(x) - \rho_1(\psi)\\
       &\geq \sup_{\substack{\psi \in  {\mathcal{C}}^{}_{b,p}(\R^d) \\ \psi\text{ convex}}} \psi^C(x) - \rho_2(\psi)=C(x,\rho_2).
    \end{align*}
\end{proof}
The increasing convex version of the restriction theorem also holds and is proven analogously.
\begin{theorem} \label{mainthmic}
    Let $\mu\in\PP(X)$, $\nu \in \PP_p(\R^d)$ and $C:X\times\PP_p(\R^d)\to[0,\infty]$ be lsc and convex in the second argument. If $C$ is $\lic$-decreasing in the second argument, then 
\begin{align} \label{mainthmdisplic}
		\inf_{\pi\in\Pi(\mu,\nu)} \int C(x,\pi_x)d\mu(x)= \sup_{\substack{\psi \in  {\mathcal{C}}^{}_{b,p}(\R^d) \\ \psi\textnormal{ inc. convex}}} \mu(\psi^C) - \nu(\psi).
	\end{align}
    Conversely, if (\ref{mainthmdisplic}) holds for all $\mu\in\PP(X)$, $\nu\in\PP_p(\R^d)$, then $C$ is $\lic$-decreasing in the second argument.
\end{theorem}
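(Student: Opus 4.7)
The plan is to mirror the proof of Theorem \ref{mainthm} step by step, substituting $\iconv$ for $\conv$, $\lic$ for $\lc$, and ``increasing convex'' for ``convex'' throughout. The second halves of Lemma \ref{repr lemm} and Lemma \ref{rc stabil lemma} are precisely the auxiliary statements that make this substitution legitimate, so no new tool is required.

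For the forward implication, assume $C$ is $\lic$-decreasing. I would start from the unrestricted WOT duality (Theorem \ref{th:kantorovichduality}) and bound the supremum from above by replacing an arbitrary test function $\psi \in \mathcal{C}_{b,p}(\R^d)$ by its increasing convex hull. The second part of Lemma \ref{rc stabil lemma} gives $\psi^C = (\iconv\psi)^C$, while Lemma \ref{repr lemm}\ref{prop1} in its increasing convex variant gives $\iconv\psi \le \psi$, hence $-\nu(\psi) \le -\nu(\iconv\psi)$. Adding the two inequalities produces
\begin{equation*}
    \mu(\psi^C) - \nu(\psi) \;\le\; \mu\bigl((\iconv\psi)^C\bigr) - \nu(\iconv\psi).
\end{equation*}
Since $\iconv\psi$ is an increasing convex element of $\mathcal{C}_{b,p}(\R^d)$ by Lemma \ref{repr lemm}\ref{prop4}, taking the supremum over $\psi$ shows that the unrestricted dual is bounded above by the right-hand side of (\ref{mainthmdisplic}). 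The reverse inequality is immediate, as restricting the class of test functions can only shrink the supremum.

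For the converse, I would fix $x\in X$ and $\rho_1,\rho_2\in\PP_p(\R^d)$ with $\rho_1\lic\rho_2$, and apply the assumed identity (\ref{mainthmdisplic}) with $\mu=\delta_x$ and $\nu=\rho_i$ for $i=1,2$. Since $\Pi(\delta_x,\rho_i)$ contains only $\delta_x\otimes\rho_i$, the primal evaluates to $C(x,\rho_i)$, so
\begin{equation*}
    C(x,\rho_i) \;=\; \sup_{\substack{\psi\in\mathcal{C}_{b,p}(\R^d)\\ \psi\ \textnormal{inc.\ convex}}} \psi^C(x) - \rho_i(\psi).
\end{equation*}
The assumption $\rho_1\lic\rho_2$ means $\rho_1(\psi) \le \rho_2(\psi)$ for every admissible $\psi$, so the $i=1$ supremand dominates the $i=2$ supremand pointwise, giving $C(x,\rho_1) \ge C(x,\rho_2)$ as required.

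The main obstacle is essentially nonexistent: because Lemma \ref{repr lemm} and Lemma \ref{rc stabil lemma} were already phrased to cover the increasing convex hull in parallel with the convex one, the proof is almost cosmetic relative to Theorem \ref{mainthm}. The only item meriting a sanity check is that $\iconv\psi$ inherits the polynomial growth controlling membership in $\mathcal{C}_{b,p}(\R^d)$, but this is precisely Lemma \ref{repr lemm}\ref{prop4} in its $\iconv$ incarnation.
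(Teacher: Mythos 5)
Your proposal is correct and follows exactly the route the paper intends: the paper states that Theorem \ref{mainthmic} is ``proven analogously'' to Theorem \ref{mainthm}, and your write-out is precisely that analogous proof, relying on the increasing-convex halves of Lemma \ref{repr lemm} and Lemma \ref{rc stabil lemma} together with Theorem \ref{th:kantorovichduality} for the forward direction, and on the $\mu=\delta_x$, $\nu=\rho_i$ specialization for the converse.
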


    The proof of Theorem \ref{mainthm} (and the auxiliary results it uses) can be modified to accommodate cost functions $C$ that take negative values, for example, the martingale Benamou--Brenier problem \cite{backhoffveraguas2025existencebassmartingalesmartingale}. This relaxation is formulated in the following theorem.

\begin{theorem} \label{thmmod1}
   Let $\mu\in\PP(X)$, $\nu \in \PP_p(\R^d)$, and $C:X\times\PP_p(\R^d)\to(-\infty,\infty]$ be lsc, convex in the second argument, and such that it satisfies the lower bound $C(x,\rho)\geq-(a_\ell(x)+\rho(b_\ell))$, for some $a_\ell\in L^1(\mu)\cap \LSC(X)$\footnote{$\LSC(X)$ denotes the real-valued lsc functions on $X$} and some convex function $b_\ell\in{\mathcal{C}}_{b,p}(\R^d)$. If $C$ is $\lc$-decreasing in the second argument, then 
    \begin{align*} 
		\inf_{\pi\in\Pi(\mu,\nu)} \int C(x,\pi_x)d\mu(x)= \sup_{\substack{{\psi \in ({\mathcal{C}}_{b,p}(\R^d)+b_\ell)}\\ \psi \textnormal{ convex}}} \mu(\psi^C) - \nu(\psi).
	\end{align*}
\end{theorem}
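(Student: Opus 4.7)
\noindent\emph{Proof plan.} The plan is to reduce to the nonnegative case via the additive shift
\[
\tilde C(x,\rho) := C(x,\rho) + a_\ell(x) + \rho(b_\ell),
\]
apply the general WOT duality of Theorem \ref{th:kantorovichduality} to $\tilde C$, translate back to $C$ by a change of dual variable, and then restrict to convex potentials by the convex-hull trick of Theorem \ref{mainthm}. The shift $\tilde C$ is nonnegative by the assumed lower bound, convex in $\rho$ (since $\rho\mapsto\rho(b_\ell)$ is linear), and lsc on $X\times\PP_p(\R^d)$: this uses $a_\ell\in\LSC(X)$ together with the $p$-weak continuity of $\rho\mapsto\rho(b_\ell)$ for $b_\ell\in\mathcal{C}_{b,p}(\R^d)$. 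I would flag from the outset that $\tilde C$ is \emph{not} in general $\lc$-decreasing, since the correction $\rho(b_\ell)$ is itself $\lc$-increasing in $\rho$ when $b_\ell$ is convex; consequently Theorem \ref{mainthm} cannot be applied to $\tilde C$ directly.

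Applying Theorem \ref{th:kantorovichduality} to $\tilde C$, a direct computation gives
\[
\psi^{\tilde C}(x) = a_\ell(x) + (\psi+b_\ell)^C(x) \quad \text{for } \psi\in \mathcal{C}_{b,p}(\R^d),
\]
while the marginal identity $\int \pi_x(b_\ell)\,d\mu(x) = \nu(b_\ell)$ yields $\inf_\pi \int \tilde C(x,\pi_x)\,d\mu = \inf_\pi \int C(x,\pi_x)\,d\mu + \mu(a_\ell) + \nu(b_\ell)$. Substituting $\phi := \psi+b_\ell$ in the duality for $\tilde C$ and cancelling the finite constants $\mu(a_\ell)$ and $\nu(b_\ell)$ on both sides produces the unrestricted identity
\[
\inf_{\pi\in\Pi(\mu,\nu)} \int C(x,\pi_x)\,d\mu(x) = \sup_{\phi\in \mathcal{C}_{b,p}(\R^d)+b_\ell} \mu(\phi^C) - \nu(\phi).
\]

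Finally I would restrict the supremum to convex $\phi$ by replaying the argument in the proof of Theorem \ref{mainthm}. Since $\mathcal{C}_{b,p}(\R^d)+b_\ell \subseteq \mathcal{C}_{b,p}(\R^d)$, Lemma \ref{rc stabil lemma} applies and gives $\phi^C = (\conv\phi)^C$, while $\conv\phi \leq \phi$ allows one to pass from $-\nu(\phi)$ to $-\nu(\conv\phi)$ at no cost. The one step needing care — and the one I expect to be the main obstacle — is verifying that $\conv\phi$ stays inside the shifted class $\mathcal{C}_{b,p}(\R^d)+b_\ell$. For this I would write $\phi = \psi+b_\ell$ with $\psi\in\mathcal{C}_{b,p}(\R^d)$ and use convexity of $b_\ell$ to sandwich
\[
\conv\psi + b_\ell \leq \conv\phi \leq \phi = \psi + b_\ell,
\]
so that $\conv\phi - b_\ell$ lies between $\conv\psi$ and $\psi$, both in $\mathcal{C}_{b,p}(\R^d)$ by Lemma \ref{repr lemm}\textit{\ref{prop4}}; hence $\conv\phi - b_\ell \in \mathcal{C}_{b,p}(\R^d)$. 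The standard chain of inequalities as in Theorem \ref{mainthm} then closes the loop and restricts the supremum to convex potentials in $\mathcal{C}_{b,p}(\R^d)+b_\ell$.
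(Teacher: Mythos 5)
The paper gives no explicit proof of Theorem \ref{thmmod1}; it only remarks that the proof of Theorem \ref{mainthm} and its auxiliary lemmas ``can be modified'' for signed costs. Your reduction via the additive shift $\tilde C(x,\rho)=C(x,\rho)+a_\ell(x)+\rho(b_\ell)$ is a clean way to realize that modification, and your early warning that $\tilde C$ is \emph{not} $\lc$-decreasing --- so Theorem \ref{mainthm} cannot be applied to $\tilde C$ wholesale --- is well placed and essential. The computation $\psi^{\tilde C}=a_\ell+(\psi+b_\ell)^C$, the cancellation of the finite constants $\mu(a_\ell)$ and $\nu(b_\ell)$, the bijective change of variable $\phi=\psi+b_\ell$, and the reuse of Lemma \ref{rc stabil lemma} (whose proof uses only measurability and $\lc$-monotonicity of $C$, never nonnegativity) are all correct.

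The one step stated a touch too quickly is the closure assertion $\conv\phi-b_\ell\in\mathcal{C}_{b,p}(\R^d)$. The sandwich $\conv\psi\le \conv\phi-b_\ell\le\psi$ does deliver the lower bound by a constant and the upper bound by a multiple of $1+|\cdot|^p$, but membership in $\mathcal{C}_{b,p}(\R^d)$ also requires \emph{continuity}, and being squeezed between two continuous functions does not give that. You should add that the sandwich shows $\conv\phi$ is finite-valued everywhere, hence a finite convex function on $\R^d$, hence continuous; since $b_\ell$ is also continuous, so is $\conv\phi-b_\ell$. With that one sentence supplied, your argument is complete and sound.
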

The analogous result also holds for the increasing convex case.

\section{Attainment considerations}
\label{sec att}
While duality holds in general, the supremum is only attained under additional regularity assumptions.  Dealing with the delicate question of dual attainment requires us to define the dual problem appropriately, so we follow the treatment in \cite{BePaRiSc25}. Let $X,Y$ be Polish spaces, $\mu\in\P(X)$ and $\nu\in\PP_p(Y)$. Let us denote by $\mathcal L^1(\rho)$ the space of $(-\infty,\infty]$-valued Borel functions that are $\rho$-integrable.

To guarantee attainment, we shall require two conditions. Firstly, in line with the classical theory, we require a \textit{boundedness condition}
\begin{equation*}
    \label{condB} \tag{B}
    C(x,\rho)\leq a(x)+\rho(b)
\end{equation*}
for a $\mu$-integrable function $a:X\to\R$ and a $\nu$-integrable function $b:Y\to\R$.

Secondly, we require a mild \textit{continuity condition}, that is, for any increasing sequence $(Y_k)_k$ of Borel sets with $\bigcup_kY_k=Y$, we have 
\begin{equation*}
    \label{condC} \tag{C}
    C(x,\rho)\geq \limsup_k C(x,(\rho|Y_k)/\rho(Y_k)).
\end{equation*}

Let us emphasize the subtle difference between Definition \ref{cconj} and the way we define the $C$-conjugate in this section. For a measurable function $\psi:Y\to(-\infty,\infty]$, we have
$$\psi^C(x):=\inf_{\substack{\rho\in\PP_p(Y)\\ \text{s.t. }\psi\in\mathcal L ^1(\rho)} } \rho(\psi)+C(x,\rho),$$
where the infimum over an empty set is interpreted as $+\infty$. In the case of $\psi\in{\mathcal{C}}_{b,p}(Y)$, the two definitions coincide, but for less regular functions they may differ.

Lastly, to precisely state the dual formulation, we define the set of admissible pairs $\Phi_C(\mu,\nu)$ as pairs of functions $(\phi,\psi)\in (-\mathcal L^1(\mu))\times \mathcal L^1 (\nu)$ satisfying 
$$\phi(x)-\rho(\psi)\leq C(x,\rho), $$
for all $x\in X$ and $\rho\in\P_p(Y)$ such that $\psi\in \mathcal L^1(\rho)$.

We can now state the main attainment result. 

\begin{theorem} \label{thmatt}
    Let $X$ be a Polish space, $\mu\in\PP(X)$ and $\nu\in\PP_p(\R^d)$. Assume that $C:X\times\PP_p(\R^d)\to[0,\infty]$ is lsc and both convex and  $\lc$-decreasing in the second argument.  If $C$ satisfies conditions (\ref{condB}) and (\ref{condC}), then
   \begin{equation}
        \inf_{\pi\in\Pi(\mu,\nu)}\int C(x,\pi_x)d\mu(x)=\sup_{\substack{(\psi^C,\psi)\in\Phi_C(\mu,\nu) \\ \psi \textnormal{ convex}}} \mu(\psi^C)-\nu(\psi) \label{att_eq}
   \end{equation}
and the supremum is attained. 
\end{theorem}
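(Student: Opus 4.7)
The plan is to combine the convex restriction result of Theorem \ref{mainthm} with a general dual attainment theorem for weak optimal transport under conditions \eqref{condB} and \eqref{condC}, as developed in \cite{BePaRiSc25}, and then convexify any unconstrained dual maximizer via the pointwise convex envelope.

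First I would establish the duality identity \eqref{att_eq}. For any convex $\psi \in \mathcal{C}_{b,p}(\R^d)$ the pair $(\psi^C, \psi)$ belongs to $\Phi_C(\mu,\nu)$: $\psi \in \mathcal L^1(\nu)$ by the growth assumption on $\mathcal{C}_{b,p}$, while $\psi^C \in -\mathcal L^1(\mu)$ follows from \eqref{condB} via the estimate $\psi^C(x) \leq \nu(\psi) + a(x) + \nu(b)$ obtained by testing the infimum in Definition \ref{cconj} against $\rho = \nu$. Therefore the supremum in \eqref{att_eq} is sandwiched between the convex supremum in Theorem \ref{mainthm} and the primal infimum, and both bounds agree.

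For attainment I would invoke the general weak-transport dual attainment result of \cite{BePaRiSc25}, which under \eqref{condB}, \eqref{condC}, lower semicontinuity and convexity in the second argument provides some $(\phi^\ast, \psi^\ast) \in \Phi_C(\mu,\nu)$ achieving the unconstrained supremum, and one may assume $\phi^\ast = (\psi^\ast)^C$ $\mu$-a.s. Setting $\bar\psi := \conv \psi^\ast$ pointwise, the goal is to show that $(\bar\psi^C, \bar\psi)$ is the desired convex maximizer. The two properties to verify are: (a) $\bar\psi \in \mathcal L^1(\nu)$, which follows because admissibility of $(\phi^\ast, \psi^\ast)$ combined with \eqref{condB} forces a lower bound $\psi^\ast(y) \geq \phi^\ast(x_0) - a(x_0) - b(y)$ at any $x_0$ where $\phi^\ast(x_0) > -\infty$ and $a(x_0) < \infty$, so a $\nu$-integrable affine minorant of $\psi^\ast$ exists, and its convex hull lies below $\bar\psi \leq \psi^\ast$; and (b) $\bar\psi^C = (\psi^\ast)^C$ $\mu$-a.s., the nontrivial extension of Lemma \ref{rc stabil lemma} from $\mathcal{C}_{b,p}$ to $\nu$-integrable functions.

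The main obstacle is (b). In the proof of Lemma \ref{rc stabil lemma} the regularization $\psi_n := \psi + \frac{1}{n}|\cdot|^p$ combined with monotone convergence was used to control $\rho(\conv \psi_n)$ uniformly over competitors; for merely $\nu$-integrable $\psi^\ast$ there is no reason for $\rho(\conv \psi_n^\ast)$ to be finite for arbitrary $\rho \in \PP_p(\R^d)$. A natural substitute is to truncate $\psi^\ast$ to a bounded function on balls $Y_k \uparrow \R^d$, apply Lemma \ref{rc stabil lemma} to each truncation, and then pass to the limit in the $C$-conjugate using condition \eqref{condC} to control the restriction of competing measures to $Y_k$; this is precisely the role condition \eqref{condC} plays throughout the attainment theory in \cite{BePaRiSc25}. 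Once $(\psi^\ast)^C = \bar\psi^C$ is established, the chain $\mu(\bar\psi^C) - \nu(\bar\psi) = \mu((\psi^\ast)^C) - \nu(\bar\psi) \geq \mu((\psi^\ast)^C) - \nu(\psi^\ast)$ combined with Step 1 shows that $\bar\psi$ attains the supremum in \eqref{att_eq}.
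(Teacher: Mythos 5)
Your high-level strategy is the same as the paper's: establish the duality via Theorem~\ref{mainthm}, take an unconstrained optimal dual pair $(\phi^*, \psi^*)$ from \cite{BePaRiSc25}, and convexify $\psi^*$ pointwise. You have also correctly located the real difficulty in step~(b), namely that Lemma~\ref{rc stabil lemma} does not directly apply to a merely $\nu$-integrable, $(-\infty,\infty]$-valued, Borel $\psi^*$. But both step~(a) and step~(b) of your argument have gaps.

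For (a), the bound $\psi^*(y)\geq \phi^*(x_0)-a(x_0)-b(y)$ is a $\nu$-integrable minorant, but it is \emph{not} affine --- $b$ is only assumed $\nu$-integrable, not affine or convex --- so it does not pass below the convex hull. In particular, it does not rule out $\conv\psi^*(y)=-\infty$ somewhere, in which case $\conv\psi^*$ is not a proper convex function and has no affine minorant at all. The paper closes this gap differently: using the finite-support representation \eqref{conv_hull_gen} of the convex hull, a point with $\conv\psi^*(y)=-\infty$ would yield discrete $\xi_k$ with mean $y$ and $\xi_k(\psi^*)\to-\infty$, and then admissibility together with the $\lc$-monotonicity of $C$ (crucially, $C(x_0,\xi_k)\leq C(x_0,\delta_y)\leq a(x_0)+b(y)$) produces a contradiction; properness of $\conv\psi^*$ then gives the affine minorant via \cite[Corollary 12.1.2]{rockafellar1997convex}. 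Your sketch never invokes $\lc$-monotonicity here, which is what actually makes the argument close.

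For (b), the proposed truncation of $\psi^*$ to bounded functions on balls $Y_k$ and application of Lemma~\ref{rc stabil lemma} does not work as stated: the truncation of a Borel, $(-\infty,\infty]$-valued function is still not continuous, hence not in $\mathcal{C}_{b,p}(\R^d)$, so the lemma is not applicable. The paper instead approximates $\conv\psi^*$ from above by the \emph{restricted} convex hull $\convr\psi^*$, which caps the support (both in diameter and cardinality, $|\mathrm{supp}(\xi)|\le d+1$) of the mixing measures rather than truncating the domain of $\psi^*$; this gives control of the $p$-moment of the pushed-forward measure $\widetilde\rho$ and allows the admissibility inequality for $(\phi^*,\convr\psi^*)$. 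The passage $\convr\psi^*\downarrow\conv\psi^*$ is then handled via Egorov's theorem combined with condition~\eqref{condC}. Also note that the paper does not prove the full equality $(\conv\psi^*)^C=(\psi^*)^C$ $\mu$-a.s.; it only shows $(\phi^*,\conv\psi^*)\in\Phi_C(\mu,\nu)$, which gives the one-sided estimate $\phi^*\le(\conv\psi^*)^C$ pointwise and suffices for optimality. Targeting the equality, as you do, is not wrong but adds nothing.
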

\begin{proof}
    The equality in (\ref{att_eq}) is an immediate consequence of \cite[Theorem 2.2]{BePaRiSc25} and Theorem \ref{mainthm}.

We now proceed to show that the supremum is attained. Let us call $(\phi,\psi)\in\Phi_C(\mu,\nu)$ an \textit{optimal dual pair} if $$\mu(\phi)-\nu(\psi)=\sup_{\substack{(\phi,\psi)\in\Phi_C(\mu,\nu)}} \mu(\phi)-\nu(\psi).$$
By \cite[Theorem 2.2]{BePaRiSc25}, an optimal dual pair $(\phi,\psi)\in\Phi_C(\mu,\nu)$ exists.
Assume that  $((\conv\psi)^C,\conv \psi)$ is also an optimal dual pair. Then:
$$\mu((\conv\psi)^C)-\nu(\conv \psi)=\sup_{\substack{(\phi,\psi)\in\Phi_C(\mu,\nu)}} \mu(\phi)-\nu(\psi)\geq\sup_{\substack{(\psi^C,\psi)\in\Phi_C(\mu,\nu)\\\psi \textnormal{ convex}}} \mu(\psi^C)-\nu(\psi),$$
which implies that the supremum in (\ref{att_eq}) is attained.

We are left to verify that $((\conv\psi)^C,\conv \psi)$ is an optimal dual pair. We do this in three steps. First, we show that $(\phi,\convr\psi)$ is an optimal dual pair, where $\convr$ is an appropriately defined approximation of the convex hull operator. Then we pass to the limit to show that $(\phi,\conv\psi)$ is also an optimal dual pair. Finally, we show that $\phi$ may be replaced by $(\conv\psi)^C$ with no ill effect on the optimality. 

\textit{Step 1:} $(\phi,\convr\psi) $ \textit{is an optimal dual pair}

\noindent Let us first define the operator $\convr$. For $R\in \mathbb N$, let $$\convr\psi(y):=\inf\left\{ \xi(\psi)\midauto \xi\in\P(B_R(y)),\, \delta_y\lc\xi,\, |\textnormal {supp}(\xi)|\leq d+1\right\}.$$ 
Compare this with the following representation of the convex hull:
\begin{equation}
\label{conv_hull_gen}
\conv\psi(y)=\inf\left\{ \xi(\psi)\midauto \xi\in\P(\R^d),\, \delta_y\lc\xi,\, |\textnormal {supp}(\xi)|\leq d+1\right\}.
\end{equation} 

Note that $\convr\psi$ is \textit{not} necessarily a convex function, but one has $\conv\psi \leq\convr\psi\leq\psi$ and $\convr\psi\downarrow\conv\psi$ as $R\to\infty$. Since $\convr\psi\leq\psi$, optimality of $(\phi,\convr\psi)$ will follow as soon as we show that $(\phi,\convr\psi) \in\Phi_C(\mu,\nu).$

Let us show that $\convr\psi \in \mathcal L^1(\nu)$. Assume that $\convr\psi(y)=-\infty$ for some $y\in \R^d$. This implies that $\conv\psi(y)=-\infty$ and thus by (\ref{conv_hull_gen}) we can find a sequence of probability measures $\{\xi_k\}_{k=1}^\infty$, each supported on at most $d+1$ points with mean $y$, such that $\lim_k \xi_k(\psi)=-\infty$. Fix $x_0\in X$ such that $\phi(x_0)\in\R$. Then we use the fact that $(\phi,\psi)\in\Phi_C(\mu,\nu)$ and the $\lc$-monotonicity of $C$ to get
$$\phi(x_0)-\xi_k(\psi)\leq C(x_0,\xi_k)\leq C(x_0,\delta_y)\leq a(x_0)+b(y).$$
Then the left-hand side tends to $\infty$, but the right-hand side is finite, leading to a contradiction. Thus we have shown that $\convr\psi$ is $(-\infty,\infty]$-valued. Note that our proof also shows that $\conv\psi$ is $(-\infty,\infty]$-valued, and hence a proper convex function. By \cite[Corollary 12.1.2]{rockafellar1997convex}, $\conv \psi$ admits an affine minorant  $\textnormal{aff}:\R^d\to\mathbb R$, so
$$\textnormal{aff}(y)\leq\conv\psi(y)\leq \convr\psi(y)\leq\psi(y),$$
but since $\psi$ and $\textnormal{aff}$ are both $\nu$-integrable, so is $\convr\psi$ (and $\conv \psi$).

Let us now show that the defining inequality of $\Phi_C(\mu,\nu)$ is satisfied by $(\phi,\convr\psi)$. Fix $\varepsilon>0$, $x\in X$ such that $\phi(x)\in\R$ and $\rho \in \P_p(\R^d)$ with $\convr\psi \in \mathcal L^1(\rho)$. By the von Neumann uniformization theorem, we can extract a measurable collection of probability measures $\{\xi_y\}_{y\in\R^d}\subseteq\P_p(\R^d)$ such that \begin{equation}
    \label{meas_sel_ineq}\convr\psi(y)\geq\xi_y(\psi)-\varepsilon, \;\, \delta_y\lc\xi_y, \;\,\xi_y(B_R(y))=1.
\end{equation}
Define $d\widetilde{\rho}(z):=d\xi_y(z) d\rho(y).$ Then $\widetilde{\rho}\ \in \P_p(\R^d)$ since $\widetilde{\rho}(|\cdot|^p)\leq2^{p-1}[{\rho}(|\cdot|^p)+R^p]$. Integrate the inequality in (\ref{meas_sel_ineq}) with respect to $\rho$ to get 
\begin{equation}
\label{ineq_meas_se}
\rho(\convr\psi)\geq\widetilde\rho(\psi)- \varepsilon.
\end{equation}
Note that $\psi\in\mathcal L^1(\widetilde\rho)$ since $\widetilde{\rho}(|\psi|)\leq \rho(\convr\psi)+\varepsilon+2\widetilde{\rho}(\textnormal{aff}^-)<\infty.$ 

Thus, we have
$$\phi(x)-\rho(\convr\psi)\leq \phi(x)-\widetilde\rho(\psi)+\varepsilon\leq C(x,\widetilde{\rho})+\varepsilon\leq C(x,\rho)+\varepsilon,$$
where the inequalities follow from (\ref{ineq_meas_se}), the fact that $(\phi,\psi)\in \Phi_C(\mu,\nu)$ and $\lc$-monotonicity of $C$, respectively. 

This shows that $(\phi,\convr\psi)\in\Phi_C(\mu,\nu)$, which immediately implies that $(\phi,\convr\psi)$ is an optimal dual pair.

\textit{Step 2:} $(\phi,\conv\psi) $ \textit{is an optimal dual pair}

\noindent
As before, since $\conv\psi\leq\psi$, optimality will follow as soon as $(\phi,\conv\psi)\in\Phi_C(\mu,\nu)$. We already verified that $\conv\psi\in\mathcal L^1(\nu)$ in the previous step, so it remains to show that the defining inequality of $\Phi_C(\mu,\nu)$ holds for the pair $(\phi,\conv\psi)$.

Fix $x\in X$ such that $\phi(x)\in\R$ and $\rho \in \P_p(\R^d)$ with $\conv\psi \in \mathcal L^1(\rho)$. By Egorov's theorem, we can find a sequence $\{A_k\}^{\infty}_{k=1}$ of increasing compact subsets of $\R^d$ such that $\rho(\bigcup_kA_k)=1$ and $\convr\psi\to\conv\psi$ uniformly on each set $A_k$. Define for each $k\in\mathbb N$ a probability measure $\rho_k:=(\rho|{A_k}) / \rho(A_k).$

Then by dominated convergence, we have
$$\phi(x)-\rho(\conv\psi)=\lim_k\phi(x)-\rho_k(\conv\psi),$$
but by uniform convergence we also get
$$\lim_k\phi(x)-\rho_k(\conv\psi)=\lim_k \lim_R\phi(x)-\rho_k(\convr\psi)\leq \limsup_k C(x,\rho_k)\leq C(x,\rho),$$
where the inequalities above are due to $(\phi,\convr\psi)\in\Phi_C(\mu,\nu)$ and the continuity condition (\ref{condC}) with $Y_k:=A_k\cup(\R^d\setminus\bigcup_{k=1}^\infty A_k)$.

\textit{Step 3:} $((\conv\psi)^C,\conv\psi) $ \textit{is an optimal dual pair}

\noindent
Since $(\phi,\conv\psi)\in\Phi_C(\mu,\nu)$, we have for all $x\in X$ and $\rho \in \P_p(\R^d)$ such that $\conv\psi \in\mathcal L^1(\rho)$ 
$$\phi(x)\leq \rho(\conv\psi)+C(x,\rho),$$
and taking the infimum over all such $\rho$ yields $\phi(x)\leq (\conv \psi)^C(x)$. On the other hand, by the definition of the convex conjugate, we have 
$$(\conv\psi)^C(x)\leq \rho(\conv\psi)+C(x,\rho)$$
for all $x\in X$ and $\rho\in\P_p(\R^d)$ such that $\conv \psi \in \mathcal L^1(\rho)$. 

By $\nu$-integrability of $\conv\psi$  and assumption (\ref{condB}), we get
$$\phi(x)\leq(\conv\psi)^C(x)\leq \nu(\conv\psi)+a(x)+\nu(b),$$
 which proves that $(\conv\psi)^C\in(-\mathcal L^1(\mu))$. 
 
 Hence $((\conv\psi)^C,\conv\psi)\in \Phi_C(\mu,\nu)$ and since $\phi\leq (\conv\psi)^C$, we have that $((\conv\psi)^C,\conv \psi)$ is an optimal dual pair.

\end{proof}
The proof given above, \textit{mutatis mutandis}, also shows the increasing convex version of Theorem \ref{thmatt}.

\begin{theorem} \label{thmatt2}
    Let $X$ be a Polish space, $\mu\in\PP(X)$ and $\nu\in\PP_p(\R^d)$. Assume that $C:X\times\PP_p(\R^d)\to[0,\infty]$ is lsc and both convex and  $\lic$-decreasing in the second argument. If $C$ satisfies conditions (\ref{condB}) and (\ref{condC}), then
   \begin{equation}
        \inf_{\pi\in\Pi(\mu,\nu)}\int C(x,\pi_x)d\mu(x)=\sup_{\substack{(\psi^C,\psi)\in\Phi_C(\mu,\nu) \\ \psi \textnormal{ increasing convex}}} \mu(\psi^C)-\nu(\psi)\label{att_eq2}
   \end{equation}
   and the supremum is attained. 
\end{theorem}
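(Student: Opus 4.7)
The plan is to retrace the proof of Theorem \ref{thmatt} \emph{mutatis mutandis}, substituting $\iconv$ for $\conv$ and $\lic$ for $\lc$ throughout, and invoking Theorem \ref{mainthmic} in place of Theorem \ref{mainthm}. The three-step architecture is preserved: an optimal dual pair $(\phi,\psi)\in\Phi_C(\mu,\nu)$ is produced by \cite[Theorem 2.2]{BePaRiSc25}, and one shows successively that $(\phi,\iconv_R\psi)$, $(\phi,\iconv\psi)$, and $((\iconv\psi)^C,\iconv\psi)$ are optimal dual pairs.

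The only substantive adjustment concerns the approximation operator. I would set
\begin{equation*}
    \iconv_R \psi(y) := \inf\bigl\{ \xi(\psi) \midauto \xi \in \P(B_R(y)),\, \delta_y \lic \xi,\, |\textnormal{supp}(\xi)| \leq d+1 \bigr\},
\end{equation*}
and verify the analogue of (\ref{conv_hull_gen}) together with the monotone convergence $\iconv_R\psi \downarrow \iconv\psi$ as $R\to\infty$. For the Carathéodory-type $(d{+}1)$-point reduction I would introduce the auxiliary function $\tilde\psi(y):=\inf_{y'\geq y}\psi(y')$ and observe that $\iconv\psi=\conv\tilde\psi$: every coordinatewise-increasing convex minorant of $\psi$ is a convex minorant of $\tilde\psi$ since $f(y)\leq f(y')\leq\psi(y')$ for all $y'\geq y$, and conversely, because $\tilde\psi$ is increasing, a shift argument shows that its convex envelope is again increasing. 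Applying the classical Carathéodory representation from Lemma \ref{repr lemm}\emph{\ref{conv repr bullet}} to $\conv\tilde\psi$ and then replacing each support point $y_i$ by some $y_i'\geq y_i$ with $\psi(y_i')\leq \tilde\psi(y_i)+\varepsilon$ yields a $(d{+}1)$-supported measure $\xi'$ with $\delta_y\lic\xi'$ and $\xi'(\psi)\leq \iconv\psi(y)+\varepsilon$, which is the desired representation.

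With this in hand, all three steps transfer essentially verbatim. In Step~1, the measurable selection of $\xi_y$ with $\delta_y\lic\xi_y$ and $\xi_y(B_R(y))=1$ produces $\widetilde\rho$ with $\rho\lic\widetilde\rho$, so $\lic$-monotonicity of $C$ gives $C(x,\widetilde\rho)\leq C(x,\rho)$; properness of $\iconv\psi$ and the existence of an affine minorant via \cite[Corollary 12.1.2]{rockafellar1997convex} go through unchanged since $\iconv\psi$ is still a proper convex function. Step~2 uses only Egorov and the continuity condition (\ref{condC}), and Step~3 relies only on the abstract definition of the $C$-conjugate together with (\ref{condB}). The main obstacle, and the only genuine departure from the convex case, is verifying the Carathéodory-type reduction for $\iconv$ sketched above; everything else is a mechanical translation.
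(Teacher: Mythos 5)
Your proposal is correct and mirrors the paper's own \emph{mutatis mutandis} strategy: the paper simply asserts that the three-step proof of Theorem \ref{thmatt} carries over unchanged, and you reproduce that architecture with $\iconv$, $\lic$ and Theorem \ref{mainthmic} in place of $\conv$, $\lc$ and Theorem \ref{mainthm}. The one substantive point you add --- that $\iconv\psi = \conv\tilde\psi$ for $\tilde\psi(y):=\inf_{y'\geq y}\psi(y')$, from which the $(d{+}1)$-point representation of $\iconv\psi$ and the monotone convergence of the truncated increasing convex hull follow by the shift argument --- is precisely the one detail the paper leaves implicit, and your argument for it is sound.
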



\begin{remark} \label{lowerboundremark}
    The assumption that $C$ is nonnegative in Theorem \ref{thmatt} can be weakened. Indeed, as in Theorem \ref{thmmod1}, it suffices to have $C(x,\rho)\geq-(a_{\ell}(x)+\rho(b_{\ell}))$ for $a_{\ell}\in L^1(\mu)\cap \LSC(X)$ and some convex function $b_{\ell}\in\mathcal{C}_{b,p}(\R^d)$. 
\end{remark}

\section{Applications}\label{APP}

\subsection{Convex Dual Potentials}

\subsubsection{Barycentric Costs}

\label{barcos}
A cost function that depends on the second argument solely via its mean/\textit{barycenter}, is called barycentric.
In \cite{GoJu18}, \text{Gozlan} and \text{Juillet} consider the dual formulation associated to such costs and restrict it to convex functions. We recover this restriction result using Theorem \ref{mainthm}.

Let $X=\R^d$ and $p=1$, and take $\mu,\nu\in\P_1(\R^d).$ We consider the cost function $C(x,\rho)=\theta(x-\mean(\rho))$ for some convex $\theta: \R ^d \to [0,\infty)$, where $\mean(\rho):=\int_{\R^d} y d\rho(y)$. Note that $\rho_1\lc\rho_2$ implies $\mean(\rho_1)=\mean(\rho_2)$, which shows that $C$ is $\lc$-decreasing, so by Theorem \ref{mainthm}, we get
$$\inf_{\pi\in\Pi(\mu,\nu)} \int \theta(x-\mean(\pi_x))d\mu(x)= \sup_{\substack{\psi \in {\mathcal{C}}^{}_{b,1}(\R^d) \\ \psi\text{ convex}}} \mu(\psi^C) - \nu(\psi).$$
This recovers {\cite[Theorem 1.1]{GoJu18}}. It is worth noting that the $C$-conjugate in the expression above simplifies to $\psi^C(x)=
\inf_{z\in\R^d}\psi(z)+\theta(x-z).$

For concreteness, taking $\theta(x)=\left \| x \right \|$, for $\left \| \cdot \right \|$ a norm on $\R^d$, produces the convex Kantorovich--Rubinstein duality, while $\theta(x)=|x|^2$ with $\mu,\nu\in\PP_2(\R^d)$ recovers the Brenier--Strassen mixture result \cite[Theorem 1.2]{GoJu18}. Moreover, Theorem \ref{thmatt} ensures that in both of these instances the dual is attained by a $\nu$-integrable convex function $\psi^{\text{opt}}:\R^d\to(-\infty,\infty]$.

\subsubsection{Martingale Benamou--Brenier} \label{sbm}
The martingale analogue of the Benamou--Brenier problem \cite{backhoffveraguas2025existencebassmartingalesmartingale}  turns out to be equivalent to a weak transport problem where the dual formulation can be restricted to convex functions. We reproduce this result using Theorem \ref{thmmod1}.

Let $X=\R^d$, $p=2$, $a_\ell(x)=0$ and $b_\ell(y)=1/2|y|^2\in{\mathcal{C}}_{b,2}(\R^d)$. We consider the following cost function:
$$C(x,\rho)=  \begin{cases} 
        -\text{MCov}(\rho,\gamma), & x=\mean (\rho) \\
      \infty, & \text{otherwise}, 
   \end{cases}
  $$
where 
$\text{MCov}(\rho_1,\rho_2):=\max_{q \in \Pi(\rho_1,\rho_2) } \int_{\R^d\times \R ^d}yz dq(y,z)$ and $\gamma$ denotes the standard normal distribution on $\R^d$.
 
Let $\mu,\nu\in\PP_2(\R^d)$ with $\mu\lc\nu$. One readily checks the assumptions of Theorem \ref{thmmod1}, hence we have
$$\sup_{\pi\in\text{MT}(\mu,\nu)} \int \text{MCov}(\pi_x,\gamma)d\mu(x)= \inf_{\substack{{\psi \in \mathcal Q} \\\psi \text{ convex} }} \nu(\psi) -\mu(\psi^C),$$
where $\text{MT}(\mu,\nu)$ denotes the set of martingale couplings between $\mu$ and $\nu$, and $\mathcal{Q}:=( {\mathcal{C}}^{}_{b,2}(\R^d)+1/2|\cdot|^2)$. Thus we have recovered \cite[Proposition 3.5]{backhoffveraguas2025existencebassmartingalesmartingale}.

\begin{remark} \label{nonintdual}
    We note that dual attainment is a subtle issue in the case of martingale transport, which in general does not satisfy the boundedness assumption (\ref{condB}). See \cite{backhoffveraguas2025existencebassmartingalesmartingale} for a detailed study of dual attainment for the martingale Benamou--Brenier problem and, in particular, \cite[Example 6.7]{backhoffveraguas2025existencebassmartingalesmartingale} for an example where there is no optimal $\nu$-integrable dual potential.
    \end{remark}
    \begin{remark} In \cite[Theorem 5.4]{BePaRiSc25}, the authors relax the martingale constraint by considering the weak transport problem with the cost function $C(x,\rho):=\beta|x-\mean (\rho)|^2-\alpha\text{MCov}(\rho,\gamma)$ for $\alpha,\beta>0.$ In this relaxed setting, dual attainment follows from Theorem \ref{thmatt} and Remark \ref{lowerboundremark}. Note that letting $\beta\to\infty$ formally recovers the martingale constraint.
\end{remark}

\subsubsection{Classical Optimal Transport} We highlight the connection with classical transport and apply our results to the celebrated quadratic cost function. 

Let $X=\R^d$,  $p=2$, $a_\ell(x)=\frac{x^2}{2} , b_\ell(y)=\frac{y^2}{2}$ and take $\mu,\nu\in\P_2(\R^d)$. We consider the cost function $C(x,\rho)=\int-xyd\rho(y)$. By Theorem \ref{thmmod1} we have
$$\sup_{\pi\in\Pi(\mu,\nu)}\int xy d\pi(x,y)=\inf_{\substack{\psi\in\mathcal Q\\\psi \text{ convex}}}\mu(\psi^*)+\nu(\psi),$$
where $\psi^*(x):=\sup_{y\in\R^d}xy-\psi(y)$ denotes the convex conjugate of $\psi$. This recovers the classical duality in \cite[Theorem 5.10]{villani2008optimal} for $c(x,y)=-xy.$ The dual is attained by a $\nu$-integrable convex function $\psi^{\text{opt}}:\R^d\to(-\infty,\infty]$. 

\begin{remark}
    More generally, in the context of classical transport---when $C(x,\rho)=\int c(x,y)d\rho(y)$---the condition that $C$ is $\lc$-decreasing in the second argument is equivalent to $c$ being concave in the second argument.
\end{remark}
\subsubsection{Strassen's Theorem for Martingales}
Recall the classical result \cite{Strassen1965} of Strassen on the existence of martingale couplings.
\begin{theorem}[Strassen] \label{strass} Let $\mu,\nu\in\PP_1(\R^d)$. Then $\mu\lc\nu$ if and only if there exists a martingale coupling between $\mu$ and $\nu$, that is, for some $\pi\in\Pi(\mu,\nu)$ we have $x=\textnormal{mean}(\pi_x)$ for $\mu$-a.e. $x\in \R^d$.
    
\end{theorem}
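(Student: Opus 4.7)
The plan is to derive Strassen's theorem as a direct corollary of Theorem \ref{mainthm} applied to the martingale indicator cost $C(x,\rho) := \chi_{x = \mean(\rho)}$, equal to $0$ when $x = \mean(\rho)$ and $+\infty$ otherwise. Existence of a martingale coupling between $\mu$ and $\nu$ is then equivalent to the primal value $\inf_{\pi \in \Pi(\mu,\nu)} \int C(x,\pi_x)\,d\mu(x)$ being zero, because the integrand is $\{0,+\infty\}$-valued and the infimum is attained by lower semicontinuity of $C$.

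First I would verify the hypotheses of Theorem \ref{mainthm} for this $C$. Lower semicontinuity follows from the fact that $\rho \mapsto \mean(\rho)$ is continuous on $\PP_1(\R^d)$ in the $1$-weak topology, so the diagonal $\{(x,\rho) : x = \mean(\rho)\}$ is closed. Convexity of $C(x,\cdot)$ is immediate, since it is the convex-analytic indicator of the affine slice $\{\rho : \mean(\rho) = x\}$. Finally, $\lc$-monotonicity is automatic: $\rho_1 \lc \rho_2$ forces $\mean(\rho_1) = \mean(\rho_2)$ by applying the order to the coordinate functions $\pm y_i$, so $C(x,\cdot)$ is in fact constant on $\lc$-equivalence classes.

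Next I would compute the $C$-conjugate on convex test functions. For $\psi \in \mathcal{C}_{b,1}(\R^d)$ convex, Jensen's inequality gives $\rho(\psi) \geq \psi(\mean(\rho)) = \psi(x)$ for every $\rho$ with $\mean(\rho) = x$, and equality is achieved at $\rho = \delta_x$. Hence $\psi^C = \psi$ on the convex cone, so Theorem \ref{mainthm} collapses to
\begin{equation*}
\inf_{\pi \in \Pi(\mu,\nu)} \int \chi_{x=\mean(\pi_x)}\,d\mu(x) \;=\; \sup_{\substack{\psi \in \mathcal{C}_{b,1}(\R^d) \\ \psi \textnormal{ convex}}} \mu(\psi) - \nu(\psi).
\end{equation*}

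The theorem now follows in both directions. If $\mu \lc \nu$, each convex $\psi \in \mathcal{C}_{b,1}(\R^d)$ satisfies $\mu(\psi) \leq \nu(\psi)$, so the right-hand side is nonpositive, while the choice $\psi = 0$ shows it equals $0$. Thus the primal infimum is $0$ and is attained at some $\pi^* \in \Pi(\mu,\nu)$; the $\{0,+\infty\}$-structure of $C$ forces $x = \mean(\pi^{*}_x)$ for $\mu$-a.e.\ $x$, producing the desired martingale coupling. Conversely, if such a $\pi$ exists, then for any convex $\psi : \R^d \to \R$, Jensen applied to each disintegration gives $\mu(\psi) = \int \psi(\mean(\pi_x))\,d\mu(x) \leq \int \pi_x(\psi)\,d\mu(x) = \nu(\psi)$, which is exactly $\mu \lc \nu$. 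The only delicate step in the whole argument is the identification $\psi^C = \psi$ and the observation that primal attainment (rather than the mere value of the infimum) is what actually delivers the coupling.
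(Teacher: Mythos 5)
Your proof is correct and follows essentially the same route as the paper: apply Theorem \ref{mainthm} to the martingale indicator cost and read off the equivalence from the resulting duality. The only cosmetic difference is that you write the cost as $\chi_{x=\mean(\rho)}$ whereas the paper writes $\chi_{\delta_x\lc\rho}$ (equivalent for $\rho\in\PP_1(\R^d)$ by Jensen and testing against $\pm y_i$), and you prove the ``martingale coupling implies $\mu\lc\nu$'' direction via Jensen directly rather than routing it back through the dual, which is a harmless simplification.
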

\begin{proof}
    By applying Theorem \ref{mainthm} to $$C(x,\rho):=\begin{cases}
        0 \quad\;\; \delta_x \lc\rho,\\
        \infty \quad \text{otherwise,}
    \end{cases}$$
we get

$$\inf_{\pi\in\Pi(\mu,\nu)} \int \chi_{\delta_x\lc\pi_x} d\mu(x)= \sup_{\substack{\psi \in {\mathcal{C}}^{}_{b,1}(\R^d) \\ \psi\text{ convex}}} \mu(\psi) - \nu(\psi).$$
    The left-hand side is zero exactly when a martingale coupling exists, whereas the right-hand side is zero exactly when $\mu\lc\nu$. This proves the claim.
\end{proof}

Although this strategy for proving Strassen's theorem is identical to that of \cite[Theorem 3.1]{GoRoSaTe14}, the novel contribution lies in the fact that it suffices to check the assumptions of Theorem \ref{mainthm} to ensure that the dual can be taken over convex functions. 

\subsection{Increasing Convex Dual Potentials}

\subsubsection{Multiple-Good Monopolist} \label{monopol}
Originally discussed in \cite{DaDeTz17} by \text{Daskalakis, Deckelbaum,} and\text{ Tzamos}, and later in
\cite{backhoffveraguas2020applicationsweaktransporttheory} by \text{Backhoff} and \text{Pammer}, the multiple-good monopolist problem was shown to admit a weak optimal transport formulation. The result \cite[Theorem 6.1]{backhoffveraguas2020applicationsweaktransporttheory} shows that the dual problem can be restricted to convex and coordinatewise increasing functions. We recover this result as a special case of Theorem \ref{mainthmic}.

Let $X=\R^d$ and fix $\theta\in{\mathcal{C}}_{b,{\color{black}p}}(\R^d)$ convex and take $\mu,\nu\in\P_p(\R^d)$.  We consider  $$C(x,\rho):=\inf_{\xi\lic\rho}\theta(x-\mean(\xi)).$$ 
The function $C$ is lsc, convex in the second argument, bounded from below and, crucially,  $\lic$-decreasing. By Theorem \ref{mainthmic}  we have

$$\inf_{\pi\in\Pi(\mu,\nu)} \int \inf_{\xi\lic\pi_x}\theta(x-\mean(\xi))d\mu(x)= \sup_{\substack{\psi \in {\mathcal{C}}^{}_{b,p}(\R^d) \\ \psi\text{ inc. convex}}} \mu(\psi^C) - \nu(\psi),$$
which recovers \cite[Theorem 6.1]{backhoffveraguas2020applicationsweaktransporttheory}. 

Moreover, by Theorem \ref{thmatt2}, we have that the dual is attained by a $\nu$-integrable increasing convex function $\psi^{\text{opt}}:\R^d\to(-\infty,\infty]$. 

\subsubsection{Increasing Convex Kantorovich--Rubinstein}
Let $X=\R^d$,  $\|\cdot\|_q$ be the $\ell^q$ norm on $\R^d$ for $q\in[1,\infty]$. Consider the cost function $C(x,\rho):=\|(x-\mean(\rho))_+\|_q$, where $(v)_+$ denotes the coordinatewise positive part of $v\in \R^d$. By Theorem \ref{mainthmic}, we have for all $\mu,\nu\in\PP_1(\R^d)$:
$$\inf_{\pi\in\Pi(\mu,\nu)} \int \|(x-\mean(\pi_x))_+\|_q d\mu(x)=\sup_{\substack{\psi \; \|\cdot\|_q\text{-Lipschitz}\\\psi\text{ inc. convex}}} \mu(\psi^C)-\nu(\psi).$$
One then replaces $\psi$ by $\psi^C$, which is $1$-Lipschitz and remains increasing and convex. So up to renaming we get
$$\inf_{\pi\in\Pi(\mu,\nu)} \int \|(x-\mean(\pi_x))_+\|_q d\mu(x)=\sup_{\substack{\psi \;1\text{-}\| \cdot\|_q\text{-Lipschitz}\\\psi\text{ inc. convex}}} \mu(\psi)-\nu(\psi).$$
By Theorem \ref{thmatt2}, the dual problem is attained by an increasing convex function $\psi^{\textnormal {opt}}:\R^d\to(-\infty, \infty ]$.

\subsubsection{Strassen's Theorem for Submartingales}
In the same spirit as Theorem \ref{strass}, we recover Strassen's theorem for submartingales as a direct consequence of our main result for increasing convex functions.
\begin{theorem}[Strassen] \label{strass2} Let $\mu,\nu\in\PP_1(\R^d)$. Then $\mu\lic\nu$ if and only if there exists a submartingale coupling between $\mu$ and $\nu$, that is, for some $\pi\in\Pi(\mu,\nu)$ we have $x\leq\textnormal{mean}(\pi_x)$ for $\mu$-a.e. $x\in \R^d$.
    
\end{theorem}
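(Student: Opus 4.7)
The approach mirrors the proof of Theorem \ref{strass} above, replacing convex order by increasing convex order. I would define the cost
$$C(x,\rho):=\begin{cases} 0 & \delta_x\lic\rho,\\ \infty & \text{otherwise,}\end{cases}$$
and apply Theorem \ref{mainthmic}. The key preliminary observation is that $\delta_x\lic\rho$ is equivalent to the coordinatewise inequality $x\leq\mean(\rho)$: testing against the coordinate maps $y\mapsto y_i$ (linear and coordinatewise increasing, hence increasing convex) gives one direction, while for any increasing convex $f$ the chain $f(x)\leq f(\mean(\rho))\leq\rho(f)$, combining monotonicity with Jensen's inequality, gives the other. Since $\rho\mapsto\mean(\rho)$ is continuous on $\PP_1(\R^d)$ under $1$-weak convergence, the set $\{(x,\rho):\delta_x\lic\rho\}$ is closed, so $C$ is lsc.

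Next I would verify the remaining hypotheses of Theorem \ref{mainthmic}. Convexity of $C(x,\cdot)$ is immediate because $\{\rho:\delta_x\lic\rho\}$ is a convex set, and $\lic$-monotonicity in the second argument is immediate from transitivity of $\lic$. A short calculation gives $\psi^C(x)=\psi(x)$ for any $\psi$ increasing convex: the defining infimum is bounded below by $\psi(x)$ (by $\lic$) and attained at $\rho=\delta_x$. Theorem \ref{mainthmic} then reads
$$\inf_{\pi\in\Pi(\mu,\nu)}\int C(x,\pi_x)\,d\mu(x)=\sup_{\substack{\psi\in\mathcal{C}_{b,1}(\R^d)\\ \psi\text{ inc. convex}}}\mu(\psi)-\nu(\psi).$$

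To conclude, I would read off both sides. The left-hand side vanishes iff some $\pi\in\Pi(\mu,\nu)$ satisfies $\delta_x\lic\pi_x$ for $\mu$-a.e.\ $x$, i.e.\ $x\leq\mean(\pi_x)$ for $\mu$-a.e.\ $x$, which is precisely the submartingale condition. The right-hand side vanishes iff $\mu(\psi)\leq\nu(\psi)$ for every increasing convex $\psi\in\mathcal{C}_{b,1}(\R^d)$; a truncation of a general increasing convex $f:\R^d\to\R$ by $f_M:=f\vee(-M)$, which remains increasing convex and (modulo integrability) lies in $\mathcal{C}_{b,1}(\R^d)$, together with monotone convergence in $M$, extends the inequality to all increasing convex test functions, giving $\mu\lic\nu$. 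I expect the main subtlety to be the integrability bookkeeping in this final approximation step, but it proceeds exactly as in the standard proof of the corresponding characterization of $\lic$ on $\PP_1(\R^d)$, and is the sole point that goes beyond a direct application of Theorem \ref{mainthmic}.
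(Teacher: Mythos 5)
Your proof is correct and follows exactly the approach the paper intends: it is the increasing-convex analogue of the proof of Theorem \ref{strass}, applying Theorem \ref{mainthmic} to the indicator cost of the constraint $\delta_x\lic\rho$, and your verification of the hypotheses (lower semicontinuity via continuity of $\rho\mapsto\mean(\rho)$, convexity, $\lic$-monotonicity, and $\psi^C=\psi$ for increasing convex $\psi$) is sound. The only imprecision is in the final approximation step: truncating by $f\vee(-M)$ does not by itself produce a function in $\mathcal{C}_{b,1}(\R^d)$, since the linear upper-growth bound may still fail; the standard fix, which you allude to, is to approximate a general increasing convex $f$ from below by finite maxima of affine functions with nonnegative gradients, each of which lies in $\mathcal{C}_{b,1}(\R^d)$, and pass to the limit by monotone convergence.
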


 \section{General Result}\label{nesto}

The final section is devoted to developing a general framework for the restriction of the dual problem. Let $(X,d_X)$ and $(Y,d_Y)$ be Polish metric spaces and $p\geq1$. We write $\LSC_p(Y)$ for the space of lsc functions on $Y$ that are bounded in absolute value by a multiple of $1+d_Y(\cdot,y_0)^p$ for some $y_0\in Y$.

   We call $E\subseteq \LSC_{p}(Y)$ a \emph{stable cone of functions}  if
\begin{enumerate}
    \item \label{ass1} $E$ contains all constant functions,
    \item \label{ass2} $E$ is a convex cone,
    \item \label{ass3} if $\{f_i\}_I\subseteq E$ is such that $\sup_I f_i \in \LSC_p(Y)$, then $\sup_I f_i\in E$.
    \item \label{ass4} for any $f\in E$, there is a collection of \textit{continuous} functions $\{f_i\}_I\subseteq E$ such that $f=\sup_I f_i$,
    \item  \label{ass5} we have $\sup\{\phi(y)\mid \phi\leq f, \phi\in E\}=\inf\{\rho(f)\mid\delta_y\ld \rho \}$ for $f\in{\mathcal{C}}_{b,p}(Y)$.
\end{enumerate}
We say that $\rho_1, \rho_2 \in\PP_p(Y)$ are in $E$-order, in signs $\rho_1\preceq_E \rho_2$, if $\rho_1(f) \leq \rho_2(f)$ for all $f\in E$. 

The proof of Theorem \ref{mainthm} naturally extends to the setting of stable cones.
\begin{theorem}\label{mainthmgen}
     Let $E$ be a stable cone of functions on $Y$, $\mu\in\PP(X)$, $\nu \in \PP_p(Y)$, and $C:X\times\PP_p(Y)\to[0,\infty]$ be lsc and convex in the second argument. If $C$ is $\preceq_E$-decreasing in the second argument, then 
\begin{align} \label{mainthmdisplgen}
		\inf_{\pi\in\Pi(\mu,\nu)} \int C(x,\pi_x)\, d\mu(x)= \sup_{\substack{\psi \in E\\ \psi \textnormal{ lower bdd.}}} \mu(\psi^C) - \nu(\psi).
	\end{align}
    Conversely, if (\ref{mainthmdisplgen}) holds for all $\mu\in\PP(X)$, $\nu\in\PP_p(Y)$, then $C$ is $\preceq_E$-decreasing in the second argument.
\end{theorem}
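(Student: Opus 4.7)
The plan is to mimic the three-step structure of the proof of Theorem \ref{mainthm}, replacing the convex hull by the $E$-envelope
\[\env\psi(y) := \sup\{\phi(y) : \phi \in E,\, \phi \le \psi\}, \quad \psi \in \mathcal{C}_{b,p}(Y).\]
First I would verify that this envelope inherits the properties used in the Euclidean proof. It lies below $\psi$ and is bounded below by $\inf\psi$ since constants belong to $E$ by axiom~\ref{ass1}. Moreover, $\env\psi$ itself lies in $E$: by axiom~\ref{ass4} each admissible $\phi$ is a supremum of continuous elements of $E$, so $\env\psi$ is lower semicontinuous; being sandwiched between a constant and $\psi$ it lies in $\LSC_p(Y)$, and axiom~\ref{ass3} then places it back in $E$.

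The technical heart of the argument is the analogue of Lemma \ref{rc stabil lemma}: $\psi^C = (\env\psi)^C$ whenever $C$ is $\ld$-decreasing in its second argument. Since $\env\psi \le \psi$, only the inequality $\psi^C \le (\env\psi)^C$ requires work. Fix $\rho \in \P_p(Y)$ and $\varepsilon > 0$; the goal is to build $\widetilde\rho \in \P_p(Y)$ with $\rho \ld \widetilde\rho$ (so that $C(x,\widetilde\rho) \le C(x,\rho)$ by monotonicity) and $\widetilde\rho(\psi) \le \rho(\env\psi) + \varepsilon$. To this end, introduce the coercive perturbation $\psi_n := \psi + \tfrac{1}{n} d_Y(\cdot,y_0)^p \in \mathcal{C}_{b,p}(Y)$. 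From the infimum representation in axiom~\ref{ass5} applied to $\psi_n$, dominated convergence gives $\env\psi_n \downarrow \env\psi$ pointwise. Pick $N$ large so that $\rho(\env\psi_N) \le \rho(\env\psi) + \varepsilon/2$, and apply axiom~\ref{ass5} together with the von Neumann uniformization theorem to extract a universally measurable selection $y \mapsto \xi_y \in \P_p(Y)$ with $\delta_y \ld \xi_y$ and $\xi_y(\psi_N) \le \env\psi_N(y) + \varepsilon/2$. Setting $d\widetilde\rho(z) := \int d\xi_y(z)\, d\rho(y)$, the coercivity of $d_Y(\cdot,y_0)^p$ forces $\widetilde\rho(d_Y(\cdot,y_0)^p) \le N[\rho(\env\psi_N) + \varepsilon/2 - \inf\psi] < \infty$, so $\widetilde\rho \in \P_p(Y)$; integrating the relations $\delta_y \ld \xi_y$ against an arbitrary $f \in E$ gives $\rho \ld \widetilde\rho$.

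With this lemma in place, the argument closes exactly as for Theorem \ref{mainthm}: starting from Kantorovich duality (Theorem \ref{th:kantorovichduality}), replace each $\psi \in \mathcal{C}_{b,p}(Y)$ by $\env\psi$ --- the lemma preserves $\mu(\psi^C)$ while $\env\psi \le \psi$ only decreases $\nu(\psi)$ --- concluding that the primal is bounded by a supremum taken over lower-bounded elements of $E$, with the reverse inequality coming from the standard weak duality $\mu(\psi^C) - \nu(\psi) \le \int C(x,\pi_x)\,d\mu(x)$. The converse direction mirrors Theorem \ref{mainthm}: specialize (\ref{mainthmdisplgen}) to $\mu = \delta_x$ and $\nu \in \{\rho_1, \rho_2\}$ with $\rho_1 \ld \rho_2$; since $\rho_1(\psi) \le \rho_2(\psi)$ for every $\psi \in E$, the corresponding suprema satisfy the same inequality, forcing $C(x,\rho_1) \ge C(x,\rho_2)$. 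The main obstacle is the envelope-stability lemma; within it the delicate points are Borel (or at least analytic) measurability of the selection set --- for which the lower semicontinuity guaranteed by axiom~\ref{ass4} and the $p$-weak lower semicontinuity of $\xi \mapsto \xi(\psi_N)$ are essential --- and the finite-moment control on $\widetilde\rho$, which hinges on coercivity of $d_Y(\cdot,y_0)^p$ rather than on any additional structural feature of $E$ beyond the envelope representation in axiom~\ref{ass5}.
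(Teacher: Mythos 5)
Your proposal is correct and spells out in detail the extension that the paper only asserts ("the proof of Theorem \ref{mainthm} naturally extends to the setting of stable cones"). You use the same three ingredients as the Euclidean argument — the $E$-envelope replacing the convex hull, the coercive perturbation $\psi_n = \psi + \tfrac1n d_Y(\cdot,y_0)^p$ combined with axiom (5) to establish $\psi^C = (\env\psi)^C$, and a measurable selection via von Neumann — and you correctly observe that since $\env\psi$ is only lsc (unlike $\conv\psi$ on $\R^d$, which is automatically continuous), the closing step needs weak duality directly rather than a reduction back to Kantorovich duality over continuous potentials. Your acknowledgment that the measurability/analyticity of the selection set is the one delicate point not handled by the abstract axioms alone is an honest and appropriate caveat; the paper glosses over this too.
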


\begin{remark}
    Note that by taking $E=\{f\in\LSC_p(\R^d): f\text{ convex}\}$ and $E=\{f\in\LSC_p(\R^d): f\text{ increasing convex}\}$ we recover Theorem \ref{mainthm} and Theorem \ref{mainthmic}.
\end{remark} 

We say that $E\subseteq \LSC_p(Y)$ is a \textit{semi-stable cone} if it satisfies assumptions (\ref{ass1})--(\ref{ass4}); these are natural requirements reflecting the basic properties of (increasing) convex functions. The somewhat artificial assumption (\ref{ass5}) is a statement about the dual representation of the $E$-hull, a generalization of the convex hull. It is classical that both the convex hull and the increasing convex hull satisfy such a representation.  Furthermore, if $Y$ is a \textit{compact} Polish space, and $E$ is a semi-stable cone, then by \cite[Corollary XI, T46] {meyer1966probability} it is a stable cone. Despite \cite{Gogus2013} asserting that the same conclusion remains true when  $Y$ is \textit{locally compact}, \cite{Khabibullin2021} points out a flaw in the argument of \cite{Gogus2013}, and \cite{Djire_Wiegerinck_2019} provides a counterexample. 






In the compact setting, we  recover the generalization of Strassen's theorem to convex cones; these particular couplings are sometimes referred to as \textit{dilations} (see \cite[Theorem 19.40]{aliprantis2007infinite}).

\begin{theorem}[Dilations]\label{dil} Assume that $Y$ is compact and  $E$ is a semi-stable cone of functions on $Y$. Let $\mu,\nu\in\PP(Y)$. Then $\mu\ld\nu$ if and only if there exists a coupling $\pi\in\Pi(\mu,\nu)$ satisfying $\delta_y \ld\pi_y$ for $\mu$-a.e. $y\in Y$.
\end{theorem}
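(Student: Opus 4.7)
The forward direction is immediate: if $\pi \in \Pi(\mu,\nu)$ with $\delta_y \ld \pi_y$ for $\mu$-a.e.\ $y$, then for every $f \in E$ one has $\mu(f) = \int f(y)\,d\mu(y) \leq \int \pi_y(f)\,d\mu(y) = \nu(f)$, giving $\mu \ld \nu$. The substance lies in the converse.

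The plan for the converse is to imitate the proof of Theorem \ref{strass} by invoking the general duality Theorem \ref{mainthmgen} with the $\{0,\infty\}$-valued cost
\[
    C(y,\rho) := \begin{cases} 0 & \text{if } \delta_y \ld \rho, \\ \infty & \text{otherwise.}\end{cases}
\]
First, I would note that $E$ is actually stable: compactness of $Y$ together with the cited result of Meyer upgrades the semi-stable hypothesis by supplying property (\ref{ass5}), so Theorem \ref{mainthmgen} is applicable. Next I would check the hypotheses on $C$. Convexity in $\rho$ is a direct computation with convex combinations, and $\ld$-monotonicity in $\rho$ is built into the definition. Lower semicontinuity is the only non-obvious point: writing
\[
    \{(y,\rho) : \delta_y \ld \rho\} = \bigcap_{f \in E \cap C(Y)}\{(y,\rho) : f(y) \leq \rho(f)\}
\]
exhibits it as an intersection of closed sets, with the restriction to continuous test functions justified by assumption (\ref{ass4}): any $f \in E$ is a supremum $\sup_I f_i$ of continuous $f_i \in E$ with $f_i \leq f$ pointwise, so if $f_i(y) \leq \rho(f_i)$ for all $i$ then $f(y) = \sup_i f_i(y) \leq \sup_i \rho(f_i) \leq \rho(f)$.

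Applying Theorem \ref{mainthmgen} then yields
\[
    \inf_{\pi \in \Pi(\mu,\nu)}\int C(y,\pi_y)\,d\mu(y) = \sup_{\substack{\psi \in E \\ \psi \textnormal{ lower bdd.}}} \mu(\psi^C) - \nu(\psi).
\]
A short computation with the definition of the $C$-conjugate gives $\psi^C(y) = \inf\{\rho(\psi) : \delta_y \ld \rho\} = \psi(y)$ for every $\psi \in E$: take $\rho = \delta_y$ for one inequality and use $\psi \in E$ for the other. Since $Y$ is compact, every element of $\LSC_p(Y)$ is bounded, so the lower-boundedness constraint is vacuous, and the right-hand side simplifies to $\sup_{\psi \in E} \mu(\psi) - \nu(\psi)$. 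This supremum is $\geq 0$ (take $\psi = 0 \in E$) and, under the hypothesis $\mu \ld \nu$, also $\leq 0$, hence equals $0$. Consequently the left-hand side vanishes. Since $C$ takes only the values $0$ and $\infty$, the integral $\int C(y,\pi_y)\,d\mu(y)$ itself lies in $\{0,\infty\}$, so the infimum being $0$ forces the existence of some $\pi \in \Pi(\mu,\nu)$ with $C(y,\pi_y) = 0$ for $\mu$-a.e.\ $y$, i.e.\ a dilation.

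I expect the main obstacle to be the lower semicontinuity verification, since this is the only step that genuinely uses stable-cone assumption (\ref{ass4}) and requires the passage from testing against all of $E$ to testing against $E \cap C(Y)$. Everything else is bookkeeping around Theorem \ref{mainthmgen}.
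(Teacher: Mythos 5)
Your proof is correct and follows the same route as the paper: apply Theorem \ref{mainthmgen} to the $\{0,\infty\}$-valued cost $C(y,\rho)=\chi_{\delta_y\ld\rho}$, using compactness and Meyer's result to upgrade the semi-stable cone to a stable one. The paper's proof is terse (a single application of Theorem \ref{mainthmgen}), and you supply the details it leaves implicit---the lsc, convexity and $\ld$-monotonicity of $C$, the identity $\psi^C=\psi$ for $\psi\in E$, and the reading-off of both directions from the resulting duality.
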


\begin{proof}
    Apply Theorem \ref{mainthmgen} to $C:Y\times\P(Y)\to[0,\infty]$ given by $$C(x,\rho):=\begin{cases}
        0 \quad\;\; \delta_x \ld\rho,\\
        \infty \quad \text{otherwise,}
    \end{cases}$$
to get
$$\inf_{\pi\in\Pi(\mu,\nu)} \int \chi_{\delta_x\ld\pi_x}d\mu(x)= \sup_{\substack{\psi \in E \\ }} \mu(\psi) - \nu(\psi).$$
\end{proof}

Let us now recall the following relation for $\mu,\nu\in\P_2(\R^d)$:
$$\inf_{\eta\lc\nu} W^2_2(\mu,\eta)= \inf_{\pi\in\Pi(\mu,\nu)} \int_{\R^d} |x-\mean(\pi_x)|^2 d\mu(x),$$
where $W_2(\cdot,\cdot)$ is the 2-Wasserstein distance.
We prove an analogous equality for general cost functions on compact Polish spaces and relate it to our dual representation result.

\begin{theorem} \label{proj} Assume that $Y$ is compact and $E$ is a semi-stable cone of functions on $Y$. Let $C:X\times\PP(Y)\to[0,\infty]$ be lsc and convex in the second argument. Then for $\mu\in\PP(X)$, $\nu\in \PP(Y)$, we have
    $$\inf_{\eta\ld\nu}  \inf_{\pi\in\Pi(\mu,\eta) }\int C(x,\pi_x)d\mu(x)=\inf_{\pi\in\Pi(\mu,\nu)} \int \inf_{\xi\ld\pi_x}C(x,\xi) d\mu(x)=\sup_{\psi\in E}\mu(\psi^C)-\nu(\psi).$$
\end{theorem}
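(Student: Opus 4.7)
The plan is to prove the chain of equalities via two independent arguments. Writing $I_1, I_2, I_3$ for the three quantities in the statement (from left to right) and setting $\tilde C(x, \rho) := \inf_{\xi \ld \rho} C(x, \xi)$, the strategy is to first establish $I_1 = I_2$ using Theorem \ref{dil} together with a measurable selection, and then $I_2 = I_3$ by applying Theorem \ref{mainthmgen} to $\tilde C$.

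For $I_1 \geq I_2$, starting from $\eta \ld \nu$ and $\pi \in \Pi(\mu, \eta)$, Theorem \ref{dil} applied to $\eta \ld \nu$ produces $q \in \Pi(\eta, \nu)$ with $\delta_y \ld q_y$ for $\eta$-a.e.\ $y$. Gluing $\pi$ and $q$ along $\eta$ yields $\tilde \pi \in \Pi(\mu, \nu)$ with $\tilde \pi_x = \int q_y \, d\pi_x(y)$; averaging $\delta_y \ld q_y$ against $\pi_x$ gives $\pi_x \ld \tilde \pi_x$, so $\tilde C(x, \tilde \pi_x) \leq C(x, \pi_x)$, and integrating against $\mu$ yields $I_2 \leq I_1$. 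For the reverse $I_1 \leq I_2$, given $\pi \in \Pi(\mu, \nu)$ and $\epsilon > 0$, I would invoke the von Neumann uniformization theorem (as in Lemma \ref{rc stabil lemma}) to pick a measurable selection $x \mapsto \xi_x$ with $\xi_x \ld \pi_x$ and $C(x, \xi_x) \leq \tilde C(x, \pi_x) + \epsilon$. Then $\eta := \int \xi_x \, d\mu(x)$ satisfies $\eta(f) = \int \xi_x(f)\, d\mu \leq \int \pi_x(f)\, d\mu = \nu(f)$ for every $f \in E$, hence $\eta \ld \nu$, and the coupling $\tilde \pi(dx, dz) := \mu(dx)\xi_x(dz) \in \Pi(\mu, \eta)$ yields $I_1 \leq \int \tilde C(x, \pi_x)\, d\mu + \epsilon$.

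For $I_2 = I_3$, one verifies that $\tilde C$ meets the hypotheses of Theorem \ref{mainthmgen}: it is convex in the second argument (from convexity of $C$ and the fact that $\alpha \xi_1 + (1-\alpha)\xi_2 \ld \alpha \rho_1 + (1-\alpha)\rho_2$ whenever $\xi_i \ld \rho_i$), $\ld$-decreasing (by transitivity of $\ld$), and jointly lsc (addressed below). Theorem \ref{mainthmgen} then gives $I_2 = \sup_{\psi \in E}\mu(\tilde C^\psi) - \nu(\psi)$; the lower-boundedness condition is automatic since every $\psi \in E \subseteq \LSC(Y)$ is bounded below on the compact space $Y$. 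The identity $\tilde C^\psi = C^\psi$ for $\psi \in E$ closes the argument: $\tilde C \leq C$ (take $\xi = \rho$) yields $\tilde C^\psi \leq C^\psi$, while $\xi(\psi) \leq \rho(\psi)$ whenever $\xi \ld \rho$ and $\psi \in E$ gives $\rho(\psi) + C(x,\xi) \geq \xi(\psi) + C(x,\xi) \geq C^\psi(x)$, whence $\tilde C^\psi \geq C^\psi$ upon taking infima over $\xi$ and $\rho$.

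The main technical obstacle is the lower semicontinuity of $\tilde C$. Given $(x_n, \rho_n) \to (x, \rho)$ and near-optimal $\xi_n \ld \rho_n$ with $C(x_n, \xi_n) \leq \tilde C(x_n, \rho_n) + 1/n$, compactness of $\PP(Y)$ (from that of $Y$) extracts a weak limit $\xi_{n_k} \to \xi$, and lsc of $C$ then yields $\tilde C(x, \rho) \leq C(x, \xi) \leq \liminf_k \tilde C(x_{n_k}, \rho_{n_k})$ once $\xi \ld \rho$ is known. For continuous $g \in E$, $\xi(g) \leq \rho(g)$ follows immediately from weak convergence. For a merely lsc $f \in E$, the plan is to apply assumption (\ref{ass4}) to write $f = \sup_i g_i$ with $g_i \in E$ continuous, use the hereditary Lindelöf property of compact metric $Y$ to extract a countable subfamily $\{g_{i_n}\}$ with the same pointwise supremum, and invoke assumption (\ref{ass3}) to see that the finite maxima $h_n := \max_{k \leq n} g_{i_k}$ are continuous members of $E$. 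Then $(h_n)$ is an increasing sequence of continuous elements of $E$ with $h_n \uparrow f$, so $\xi(h_n) \leq \rho(h_n)$ transfers by monotone convergence to $\xi(f) \leq \rho(f)$, completing the proof that $\xi \ld \rho$.
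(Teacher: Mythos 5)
Your proof takes essentially the same approach as the paper's: the gluing argument via Theorem~\ref{dil} together with a measurable selection for the first equality, and Theorem~\ref{mainthmgen} applied to $\widehat C(x,\rho)=\inf_{\xi\ld\rho}C(x,\xi)$ together with the identity $\psi^{\widehat C}=\psi^C$ for $\psi\in E$ for the second. The only place you depart is in the lower semicontinuity of $\widehat C$: the paper simply cites \cite[Proposition~7.33]{bertsekas1978stochastic} (which applies since $\PP(Y)$ is compact and the constraint set $\{(\rho,\xi):\xi\ld\rho\}$ is closed), whereas you prove the closedness of that set directly from axioms (\ref{ass3}) and (\ref{ass4}) of the semi-stable cone plus the hereditary Lindel\"of property of $Y$ and monotone convergence --- this is correct and usefully fills in a detail the paper leaves implicit in the citation.
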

\begin{proof}
  Let us prove the first equality.  Fix $\varepsilon>0,$ and let $\eta^*\ld\nu$ and $\pi^*\in\Pi(\mu,\eta^*)$ be almost optimizers:
    \begin{align*}
        \inf_{\eta\ld\nu} \left[ \inf_{\pi\in\Pi(\mu,\eta) }\int C(x,\pi_x)d\mu(x)\right]+\varepsilon\geq
          \int C(x,\pi^*_x)d\mu(x).
    \end{align*}
By Theorem \ref{dil}, we have that $\eta^*\ld\nu$ implies the existence of a coupling $\widetilde{\pi}\in \Pi(\eta^*,\nu)$ that satisfies $\delta_{\widetilde{y}}\ld\widetilde{\pi}_{\widetilde{y}}$ for $\eta^*$-a.e. $\widetilde{y}\in Y$.

Define the probability measure $\pi$ on $X\times Y$ by $d\pi(x,y):=d\widetilde{\pi}_{\widetilde{y}}(y) d\pi^*(x,\widetilde{y}).$ One directly verifies that $\pi\in\Pi(\mu,\nu)$ and satisfies $\pi_x^*\ld\pi_x$ for $\mu$-a.e. $x\in X$. Hence
 \begin{align*}
          \int C(x,\pi^*_x)d\mu(x)\geq \int \inf_{\xi\ld\pi_x}C(x,\xi) d\mu(x) \geq \inf_{\pi\in\Pi(\mu,\nu)}\int\inf_{\xi\ld\pi_x}C(x,\xi) d\mu(x) .
    \end{align*}

For the converse inequality, fix $\varepsilon>0$ and let $\pi^*\in\Pi(\mu,\nu)$ be almost optimal and $\{\xi_x^*\}_{x\in X}$ a measurable selection of almost optimizers satisfying $\xi_x^*\ld \pi_x^*$. Then
$$\inf_{\pi\in\Pi(\mu,\nu)} \int \left[\inf_{\xi\ld\pi_x}C(x,\xi)\right] d\mu(x)+\varepsilon \geq \int C(x,\xi_x^*)d\mu(x).$$
Now define $d\eta(y):=\int_X d\xi_x^*(y)d\mu(x)$ and $d\pi(x,y):=d\xi^*_x(y)d\mu(x).$ Then $\eta\ld\nu$ and $\pi\in\Pi(\mu,\eta).$ This proves the first equality. 

To show the second equality, note that $\widehat{C}(x,\rho):=\inf_{\xi\ld\rho} C(x,\xi)$ is lsc (see \cite[Proposition 7.33]{bertsekas1978stochastic}), convex in $\rho$ and $\ld$-decreasing, so by Theorem \ref{mainthmgen}, we get
$$\inf_{\pi\in\Pi(\mu,\nu)} \int \inf_{\xi\ld\pi_x}C(x,\xi) d\mu(x)=\sup_{\psi\in E}\mu(\psi^{\widehat{C}})-\nu(\psi).$$
One easily shows that $\psi^{\widehat{C}}=\psi^C$ for $\psi\in E$, which finalizes the proof.
\end{proof}

    To illustrate the underlying idea of Theorem \ref{proj}, consider the following two-step process of transporting $\mu$ to $\nu$: first transport $\mu$ optimally (w.r.t. the cost function $C$)  to an intermediate measure $\eta$ that satisfies $\eta\ld\nu$, and then transport $\eta$ to $\nu$ via an $E$-dilation coupling, whose existence is guaranteed by Theorem \ref{dil}, with the intermediate measure $\eta$ chosen to minimize the transport cost between $\mu$ and $\eta$. The cost incurred by the first step (that is, the double infimum in Theorem \ref{proj}) is identical to the weak transport cost between $\mu$ and $\nu$ with respect to the $\ld$-decreasing hull of $C$, i.e. $\widehat{C}(x,\rho)=\inf_{\xi\ld\rho} C(x,\xi)$. Additionally, both values coincide with the dual formulation restricted to potentials in the semi-stable cone $E$.

    In the event that $C$ itself is $\ld$-decreasing, the cost function $\widehat{C}$ simplifies to $C$, the optimal $\eta$ is the target measure $\nu$, and the statement of Theorem \ref{proj} collapses to Theorem \ref{mainthmgen} for compact Polish spaces.

\bibliographystyle{plain}

\bibliography{joint_biblio, literatur}


\end{document}